\newcommand\reallywidehat[1]{%
\savestack{\tmpbox}{\stretchto{%
  \scaleto{%
    \scalerel*[\widthof{\ensuremath{#1}}]{\kern-.6pt\bigwedge\kern-.6pt}%
    {\rule[-\textheight/2]{1ex}{\textheight}}
  }{\textheight}%
}{0.5ex}}%
\stackon[1pt]{#1}{\tmpbox}%
}
\definecolor{labelkey}{rgb}{0,0,1}
\newenvironment{altproof}[1]
{\noindent
	{\em Proof of {#1}}.}
{\nopagebreak\mbox{}\hfill $\Box$\par\addvspace{0.5cm}}
\def\R {\mathbb{R}}
\newcommand{\eps}{\varepsilon}
\def\div{\mathop{\mathrm{div}\,}}
\def\supp{\mathrm{supp}}
\def\Z{\mathbb{Z}}
\def\d{\diamond}
\def\N{\mathbb{N}}
\def\R{\mathbb{R}}
\def\D{\mathcal{D}}
\newcommand{\cD}{{\mathcal D}}
\newcommand{\cM}{{\mathcal M}}
\newcommand{\cP}{{\mathcal P}}
\newcommand{\Om}{\Omega}
\newcommand{\weakto}{\rightharpoonup}
\newcommand{\tu}{\widetilde{u}}
\def\pr{\right )}
\def\le{\left (}
\def\gg{^{\ast\ast}}
\def\d{\,d}
\def\D{\mathcal{D}^{m,2}(\R^N)}
\def\f{\varphi}
\def\supp{\mathrm{supp}\,}
\def\e{\varepsilon}
\newtheorem{proposition}{Proposition}[section]
\newtheorem{theorem}[proposition]{Theorem}
\newtheorem{lemma}[proposition]{Lemma}
\theoremstyle{definition}
\numberwithin{equation}{section}
\title[Polyharmonic Nonlinear Scalar Field Equations]{Polyharmonic Nonlinear Scalar Field Equations}
\author[A. Cannone]{Alessandro Cannone}
\address[A. Cannone]{\newline\indent
Dipartimento di Matematica,
\newline\indent
Università degli Studi di Bari Aldo Moro,
\newline\indent
Via Orabona 4, 70125, Bari, Italy
\newline\indent
}
\email{\href{mailto: alessandro.cannone@uniba.it}{alessandro.cannone@uniba.it}}
\author[S. Cingolani]{Silvia Cingolani}
\address[S. Cingolani]{\newline\indent
Dipartimento di Matematica,
\newline\indent
Università degli Studi di Bari Aldo Moro,
\newline\indent
Via Orabona 4, 70125, Bari, Italy
\newline\indent
}
\email{\href{mailto: silvia.cingolani@uniba.it}{silvia.cingolani@uniba.it}}
\author[J. Mederski]{Jaros\l aw Mederski}
\address[J. Mederski]{\newline\indent
Institute of Mathematics,
	\newline\indent 
	Polish Academy of Sciences,
	\newline\indent 
	ul. \'Sniadeckich 8, 00-656 Warsaw, Poland
	\newline\indent  
	and
	\newline\indent 
	Faculty of Mathematics and Computer Science,		\newline\indent 
	Nicolas Copernicus University, \newline\indent ul. Chopina 12/18,		 87-100 Toruñ, Poland
}
\email{\href{mailto:jmederski@impan.pl}{jmederski@impan.pl}}
\subjclass[2000]{35J91,35J20}
\keywords{Nonlinear scalar field equation, polyharmonic operator, critical point theory, Pohozaev Identity}
\begin{document}

\begin{abstract}
In this paper, we present a result on the existence of ground state solutions for the polyharmonic nonlinear equation $(-\Delta)^m u=g(u)$,
assuming that  $g$ has a general subcritical growth at infinity, inspired by  Berestycki and Lions \cite{BerestyckiLions}. In comparison with the biharmonic case studied in \cite{Med-Siem}, the presence of a higher-order operator gives rise to several analytical challenges, which are overcome in the present work.
Furthermore, we establish a new polyharmonic logarithmic Sobolev inequality.
\end{abstract}

\maketitle

\section{Introduction}  In this paper  we are interested in  the existence of solution of the following {\em polyharmonic nonlinear equation}
\begin{equation}\label{problem}
(-\Delta )^m u = g(u)\quad \textit{in}\quad  \mathbb{R}^N,
\end{equation}where $m \in \N$, $m\geq 2$,  $N>2m$ and $(-\Delta)^m$ is the polyharmonic operator defined below. Here $g :  \R \to \R$ satisfies the following condition \begin{itemize}
	\item[$(g0)$] $g$ is continuous and there is a constant $c>0$ such that $|g(s)|\leq c\big(1+|s|^{2^{*}-1}\big)\quad \quad$ for        $ s\in\R$,
\end{itemize}where $2^*:=\frac{2N}{N-2m}$.\\ 
\indent
The study of higher-order differential elliptic operators arise in a variety of physical contexts. A classical example is the biharmonic equation (with $m=2$) describing the bending of thin elastic plates in the Kirchhoff–Love theory \cite{love1927elasticity}. In more advanced models, such as strain-gradient elasticity and media with microstructure, higher-order terms are required to describe internal stresses and deformations~\cite{mindlin1964microstructure,Antman}. Other physical context are as follows:  low Reynolds number hydrodynamics, in
structural engineering \cite{Selvadurai,Meleshko} and nonlinear optics \cite{Fibich}. We also mention that high order operators arise to model phase separation of binary fluids. We mention \cite{CH}, where Cahn-Hilliard equation was introduced. Recently applications of this model include complex fluids and soft matter, such as are found in polymer science (see \cite{MMR,R}). A mathematical results concerning nonlinear problems of the form \eqref{problem} are presented in \cite{Gazzola}, see also the references therein. Also in \cite{Grunau} it has been obtained a  Boggio's formula to “polyharmonic” operators of any fractional order $s > 0 $.\\
\indent
It is important to note that techniques applicable to second-order problem -- such as those involving the Laplacian operator $-\Delta$ -- may not extend naturally to higher-order settings. For example, it is well established that the biharmonic operator $(-\Delta)^2 = \Delta^2$ falls outside the scope of certain classical tools, including maximum principles and Polya–Szegő-type inequalities. Moreover, even if $(\Delta u)^2$ belongs to $L^1(\mathbb{R}^N)$, this does not guarantee that $\Delta |u|$ is locally integrable, i.e., $\Delta |u| \notin L^1_{\text{loc}}(\mathbb{R}^N)$ may still occur. For higher order problems, i.e. with $m>2$, we encounter much more difficulties, e.g. the Green’s function may change sign, even in simple domains, complicating representation formulas and positivity arguments, we have weaker regularity properties, Sobolev inequalities and embeddings become less favorable. Although the problem \eqref{problem} was recently studied in the biharmonic case in \cite{Med-Siem}, the higher-order case remained open due to difficulties that have been overcome in the present work. Moreover, we also obtaine a new polyharmonic logarithmic Sobolev inequality, which generalizes the classical logarithmic Sobolev inequality given in \cite{Weissler}.

\indent
Using a variational approach, we will look for solutions to our problem in the Sobolev space $\D$, defined as the closure of space $C_0^{\infty}(\R^N)$ compared to the norm $
\|\cdot\|_{D^{m,2}} := 
\left(
\sum_{|\alpha|=m} \|D^{\alpha} \cdot\|_{L^2(\mathbb{R}^N)}^2
\right)^{\frac{1}{2}}
$.\\ \noindent

We obtain  a necessary condition on the solutions of the polyharmonic problem, that is the following general Pohožaev-type result, in particular see \cite{Med-Siem} for the case $m=2$. We would like to point out that, unlike the case of the biharmonic, with the polyharmonic we are dealing with a high number of derivatives and we must be careful in using the correct convergence estimates, which will be shown below.
\begin{theorem}\label{th:Poho}
    Let $u \in \D$ be a weak solution to \eqref{problem}, where $g$ satisfies $(g_0)$. Then
\[
\int_{\mathbb{R}^N} |\nabla^m u|^2 \, dx = \frac{2N}{N - 2m} \int_{\mathbb{R}^N} G(u)  \, dx, \tag{1.3}
\]
provided that $G(u) \in L^1(\mathbb{R}^N)$, where $G(s) := \int_0^s g(t) \, dt$, $x \in \mathbb{R}^N$, $t \in \mathbb{R}$  and  
\[
\begin{aligned}	
	\nabla^m u:=
	\begin{cases}
		\Delta^{\frac{m}{2}} u  &\text{if m is even,}\\
		\nabla \Delta^{\frac{m-1}{2}} u &\text{if m is odd.}
	\end{cases}
\end{aligned}
\]
\end{theorem}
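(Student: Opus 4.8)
The plan is to test the equation against a localized dilation field and to identify the resulting quadratic form.

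\emph{Step 1 (regularity).} First I would upgrade the regularity of $u$. Since $u\in\D\hookrightarrow L^{2^*}(\R^N)$, condition $(g_0)$ gives $g(u)\in L^{(2^*)'}_{\mathrm{loc}}(\R^N)$ with $(2^*)'=\tfrac{2N}{N+2m}$, and a finite iteration of interior $L^p$-estimates for the polyharmonic operator combined with Sobolev embeddings, applied to $(-\Delta)^m u=g(u)$, yields $u\in W^{2m,q}_{\mathrm{loc}}(\R^N)$ for every $q<\infty$; in particular $u\in W^{m+1,2}_{\mathrm{loc}}(\R^N)$, which is all that is needed to justify the manipulations below. I would also record the global bounds $\nabla^j u\in L^{p_j}(\R^N)$, $p_j:=\tfrac{2N}{N-2(m-j)}$, for $j=0,\dots,m$ (so $p_0=2^*$ and $p_m=2$), obtained by iterating the Sobolev inequality on $C_0^\infty(\R^N)$ and using that $\D$ is the closure of $C_0^\infty(\R^N)$.

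\emph{Step 2 (localized multiplier).} For $R>0$ fix $\eta_R(x):=\eta(x/R)$ with $\eta\in C_0^\infty(\R^N)$, $0\leq\eta\leq1$, $\eta\equiv1$ on $B_1$ and $\supp\eta\subset B_2$, and use $v_R:=\eta_R\,(x\cdot\nabla u)$ as a test function in the weak formulation $\int_{\R^N}\nabla^m u\cdot\nabla^m\varphi\,dx=\int_{\R^N}g(u)\,\varphi\,dx$; by Step 1, $v_R$ is compactly supported and belongs to $W^{m,2}(\R^N)$, hence is admissible by density of $C_0^\infty(\R^N)$ and the fact that $g(u)v_R\in L^1$. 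On the right-hand side $g(u)\,(x\cdot\nabla u)=x\cdot\nabla\big(G(u)\big)$, so an integration by parts gives
\[
\int_{\R^N}g(u)\,v_R\,dx=-\int_{\R^N}\big(N\eta_R+x\cdot\nabla\eta_R\big)G(u)\,dx\,,
\]
and the right side tends to $-N\int_{\R^N}G(u)\,dx$ as $R\to\infty$, since $G(u)\in L^1(\R^N)$ and $x\cdot\nabla\eta_R$ is bounded and supported in $B_{2R}\setminus B_R$.

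\emph{Step 3 (leading term).} The algebraic fact that drives the computation is
\[
\nabla^m(x\cdot\nabla u)=m\,\nabla^m u+(x\cdot\nabla)\nabla^m u\,,
\]
which holds for $\nabla^m=\Delta^{m/2}$ ($m$ even) and componentwise for $\nabla^m=\nabla\Delta^{(m-1)/2}$ ($m$ odd); it follows for smooth functions from the product rule -- equivalently, by differentiating $\nabla^m\big(u(\lambda\,\cdot)\big)=\lambda^m(\nabla^m u)(\lambda\,\cdot)$ at $\lambda=1$ -- and holds a.e.\ by Step 1. Expanding $\nabla^m v_R$ by the Leibniz rule separates the left-hand side into the leading term $\int_{\R^N}\eta_R\,\nabla^m u\cdot\nabla^m(x\cdot\nabla u)\,dx$ plus remainders supported in $B_{2R}\setminus B_R$. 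Using the identity above, $\nabla^m u\cdot(x\cdot\nabla)\nabla^m u=\tfrac12\,x\cdot\nabla|\nabla^m u|^2$, and one integration by parts, the leading term equals
\[
\Big(m-\tfrac N2\Big)\int_{\R^N}\eta_R\,|\nabla^m u|^2\,dx-\tfrac12\int_{\R^N}x\cdot\nabla\eta_R\,|\nabla^m u|^2\,dx\ \longrightarrow\ \Big(m-\tfrac N2\Big)\int_{\R^N}|\nabla^m u|^2\,dx
\]
as $R\to\infty$, since $|\nabla^m u|^2\in L^1(\R^N)$.

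\emph{Step 4 (remainders; the main obstacle).} It remains to show that the Leibniz remainders vanish. Each of them is, for some $1\leq k\leq m$, dominated by $C\,R^{-k}\int_{B_{2R}\setminus B_R}|\nabla^m u|\,\big(|x|\,|\nabla^{m-k+1}u|+|\nabla^{m-k}u|\big)\,dx$, and every derivative of $u$ occurring here has order $\leq m$. By H\"older on the annulus together with the global bounds of Step 1, $\|\nabla^{m-\ell}u\|_{L^2(B_{2R}\setminus B_R)}\leq C\,R^{\ell}$ for $\ell=1,\dots,m$ -- the exponent being exactly $N\big(\tfrac12-\tfrac1{p_{m-\ell}}\big)=\ell$, owing to $p_{m-\ell}=\tfrac{2N}{N-2\ell}$ -- while $\|\nabla^m u\|_{L^2(B_{2R}\setminus B_R)}\to0$; substituting, the $|x|$-term is $O(R^{1-k})\cdot o(1)\cdot O(R^{k-1})=o(1)$ and the other term is $O(R^{-k})\cdot o(1)\cdot O(R^{k})=o(1)$, so all remainders tend to $0$. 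Combining Steps 2--4 then yields $\big(m-\tfrac N2\big)\int_{\R^N}|\nabla^m u|^2\,dx=-N\int_{\R^N}G(u)\,dx$, which is the asserted identity. I expect Step 4 to be the crux: for $m>2$ there are $O(m)$ families of remainders coupling the intermediate derivatives $\nabla^j u$ ($j<m$) with the unbounded weight $|x|$, and closing the estimates requires both the sharp Sobolev exponents $p_j$ -- so that the powers of $R$ coming from the cutoff derivatives exactly cancel the growth of the annular $L^2$-norms -- and enough a priori regularity from Step 1 to legitimize every Leibniz expansion and integration by parts; this is precisely where the polyharmonic case departs from the biharmonic argument of \cite{Med-Siem}.
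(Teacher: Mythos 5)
Your overall strategy is sound and, at its core, it is the same as the paper's: multiply by a truncated dilation field (cutoff times $x\cdot\nabla u$), exploit the commutator identity $\nabla^m(x\cdot\nabla u)=m\,\nabla^m u+(x\cdot\nabla)\nabla^m u$ (the paper's version is $\Delta^i(\nabla u\cdot x)=2i\,\Delta^i u+\nabla\Delta^i u\cdot x$), and kill the annular remainders by H\"older on $B_{2R}\setminus B_R$ together with the global bounds $\nabla^j u\in L^{p_j}(\R^N)$, $p_j=\tfrac{2N}{N-2(m-j)}$, whose exponents exactly offset the $R^{-k}$ decay of the cutoff derivatives while the vanishing $L^2$-tail of $\nabla^m u$ supplies the $o(1)$. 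Your bookkeeping is different and arguably cleaner: you test the weak formulation with $\eta_R(x\cdot\nabla u)$ and expand by Leibniz, organizing all remainders by the number of derivatives falling on the cutoff, which handles even and odd $m$ uniformly; the paper instead works with the strong equation and the explicit pointwise divergence-form identities of \cite{BMS} for $\Delta^{2k}u\,(\nabla u\cdot x)\psi_n$ and $\Delta^{2k+1}u\,(\nabla u\cdot x)\psi_n$, then estimates the resulting boundary-layer terms one by one. Your Steps 2--4, including the scaling count $\|\nabla^{m-\ell}u\|_{L^2(B_{2R}\setminus B_R)}\leq CR^{\ell}$ and the treatment of the weight $|x|$, are correct.

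The one genuine soft spot is Step 1. The regularity $u\in W^{2m,q}_{\mathrm{loc}}$ (in particular $u\in W^{m+1,2}_{\mathrm{loc}}$, which you rightly need to legitimize the test function and the integrations by parts) does \emph{not} follow from ``a finite iteration of interior $L^p$-estimates combined with Sobolev embeddings'': since $(g0)$ allows the critical growth $|s|^{2^*-1}$, the naive bootstrap stalls — from $u\in L^{2^*}$ one gets $g(u)\in L^{(2^*)'}_{\mathrm{loc}}$, hence $u\in W^{2m,(2^*)'}_{\mathrm{loc}}$, which embeds back into $L^{2^*}_{\mathrm{loc}}$ with no gain, and a single Calder\'on--Zygmund step only yields $\nabla^{m+1}u\in L^{2N/(N+2)}_{\mathrm{loc}}$, short of $L^2_{\mathrm{loc}}$. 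One needs a Brezis--Kato type argument: write $|g(u)|\leq a(x)(1+|u|)$ with $a=g(u)/(1+|u|)\in L^{N/2m}_{\mathrm{loc}}$ and run a Moser-type iteration to obtain first $u\in L^q_{\mathrm{loc}}$ for all $q<\infty$, and only then the $W^{2m,q}_{\mathrm{loc}}\cap C^{2m-1,\alpha}_{\mathrm{loc}}$ regularity; this is exactly what the paper invokes (Theorem \ref{theo_regularity}, from \cite{Siemi}, cf. \cite{BrezisKato}). With that substitution, your argument goes through.
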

Next, we show the existence of weak solutions to \eqref{problem} under growth assumption at $0$ and at infinity inspired by a seminal paper due to Berestycki and Lions \cite{BerestyckiLions}  for the  scalar field equation and see also \cite{Mederski,MederskiNon2020}. We assume that $g(0)=0$ and $(g0)$ holds.
Let 
\[
\begin{aligned}	
	G_+(s):=
	\begin{cases}
		\int_0^s \max\{g(t),0\}\, dt  &\text{for } s\geq 0,\\
		\int_{s}^0 \max\{-g(t),0\}\, dt&\text{for }s<0,
	\end{cases}
\end{aligned}
\]
and $g_+(s)=G_+'(s)$. Suppose in addition that
 the following conditions are satisfied:
\begin{itemize}
	\item[$(g1)$] $\lim_{s\to 0}G_+(s)/|s|^{2^{*}}=0$,
	\item[$(g2)$] there exists  $\xi_0>0$ such that $G(\xi_0)>0$,
	\item[$(g3)$] $\lim_{|s|\to \infty}G_+(s)/|s|^{2^{*}}=0$.
\end{itemize}
We introduce  the Poho\v{z}aev manifold
\begin{equation}\label{def:Poh}
\cM :=\Big\{u\in \cD^{2,2}(\R^N)\setminus\{0\}: \int_{\R^N}|\nabla^m u|^2=2^{*}\int_{\R^N}G(u)\, dx\Big\},
\end{equation}
and in view of Theorem \ref{th:Poho}, $\cM$ contains all nontrivial solutions. Let us define the functional $ J(u): \D \to \R$  associated to \eqref{problem} by
\begin{equation}\label{eq:action}
J(u):=\frac12\int_{\R^N} |\nabla^m u|^2- \int_{\R^N} G(u)\, dx.
\end{equation}
Clearly, $J$ is of class $C^1$ and its critical points correspond to weak solutions to \eqref{problem}.

The existence result reads as follows.
\begin{theorem}\label{thm:2}
Let $(g0)$--$(g3)$ be satisfied and $g(0)=0$.
Then $\inf_{\cM} J>0$ and there is a ground state solution $u_0\in \D$ to \eqref{problem}, i.e. $u_0\in\cM$ solves \eqref{problem}
 and $J(u_0) = \inf_{\cM}J$. Moreover $u_0\in C^{2m-1,\alpha}_\text{loc}(\R^N)\cap W^{2m,q}_\text{loc}(\R^N)$, for any $0<\alpha < 1$ and $1\leq q<\infty$.
\end{theorem}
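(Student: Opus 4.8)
\textbf{Proof plan for Theorem \ref{thm:2}.}

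The plan is to follow the Berestycki--Lions scheme adapted to the polyharmonic setting, as in \cite{Med-Siem} for $m=2$, working on the constraint manifold $\cM$. First I would establish that $\inf_{\cM}J>0$: on $\cM$ one has $J(u)=\frac12\int|\nabla^m u|^2-\frac{1}{2^*}\int|\nabla^m u|^2=\frac{m}{N}\int|\nabla^m u|^2$, so it suffices to bound $\int|\nabla^m u|^2$ away from $0$ on $\cM$. For this I would use $(g1)$ and $(g3)$ together with the Sobolev embedding $\D\hookrightarrow L^{2^*}(\R^N)$ to show that for every $\eps>0$ there is $C_\eps$ with $\int G(u)\le\int G_+(u)\le\eps\|u\|_{L^{2^*}}^{2^*}+C_\eps\cdot(\text{lower order})$; combined with the constraint equation $\int|\nabla^m u|^2=2^*\int G(u)$ and the Sobolev inequality $\|u\|_{L^{2^*}}^{2}\le S^{-1}\int|\nabla^m u|^2$, choosing $\eps$ small forces $\int|\nabla^m u|^2\ge\de>0$. (One also needs to check $\cM\neq\emptyset$: given $\xi_0$ from $(g2)$, a suitable cut-off/rescaling $u_t(x)=v(x/t)$ with $v\equiv\xi_0$ on a large ball produces, after the scaling $\int|\nabla^m u_t|^2=t^{N-2m}\int|\nabla^m v|^2$ and $\int G(u_t)=t^N\int G(v)$, an element of $\cM$ by choosing $t$ appropriately once $\int G(v)>0$.)

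Next I would take a minimizing sequence $(u_n)\subset\cM$ for $J$. By the identity above, $\int|\nabla^m u_n|^2$ is bounded, so $(u_n)$ is bounded in $\D$. The key step is to recover compactness. Since $\D$ is not compactly embedded in any $L^p(\R^N)$, I would first apply the action of translations and dilations: replace $u_n$ by $\la_n^{(N-2m)/2}u_n(\la_n\cdot+y_n)$ with $\la_n,y_n$ chosen (via a concentration--compactness / Lions-type lemma, or the ``missing-mass'' argument) so that the rescaled sequence does not vanish; the constraint $\cM$ and the functional $J$ are invariant under this rescaling, so minimality is preserved. Passing to a weak limit $u_0\neq 0$ in $\D$, and using $u_n\to u_0$ a.e. (up to a subsequence), I would show $\int G(u_0)\ge\liminf\int G(u_n)$ is not directly true, so instead one argues as in \cite{BerestyckiLions}: by weak lower semicontinuity $\int|\nabla^m u_0|^2\le\liminf\int|\nabla^m u_n|^2$, and using $(g1)$, $(g3)$ and Brezis--Lieb one gets $\int G_+(u_0)\ge\limsup\int G_+(u_n)$ while controlling $G_-=G_+-G$ by Fatou. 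This yields that $u_0$ (after a possible further dilation to land exactly on $\cM$) satisfies $J(u_0)\le\inf_{\cM}J$, hence equality, and $u_0\in\cM$.

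Then I would show $u_0$ is a genuine weak solution, not merely a constrained critical point. By the Lagrange multiplier rule, $J'(u_0)=\mu\,P'(u_0)$ where $P(u)=\int|\nabla^m u|^2-2^*\int G(u)$ is the constraint; testing this against the Pohozaev-type dilation generator (and invoking Theorem \ref{th:Poho}, which says every weak solution lies on $\cM$, together with the analogous Pohozaev identity for the equation $(-\Delta)^m u=\mu'\,g(u)$) forces $\mu=0$, so $J'(u_0)=0$. This is the standard ``$\cM$ is a natural constraint'' argument; the polyharmonic Pohozaev identity of Theorem \ref{th:Poho} is exactly what makes it work. Finally, the regularity $u_0\in C^{2m-1,\alpha}_{\mathrm{loc}}\cap W^{2m,q}_{\mathrm{loc}}$ follows by a bootstrap: from $u_0\in\D\hookrightarrow L^{2^*}$ and $(g0)$ we get $g(u_0)\in L^{2^*/(2^*-1)}_{\mathrm{loc}}$, so elliptic $L^q$ estimates for $(-\Delta)^m$ (Calder\'on--Zygmund theory, using that $g(u_0)\in L^1_{\mathrm{loc}}$ after the first step and iterating with Sobolev embeddings $W^{2m,q}\hookrightarrow L^{q^*}$ or $C^{k,\alpha}$) push $u_0$ into $W^{2m,q}_{\mathrm{loc}}$ for all $q<\infty$ and then into $C^{2m-1,\alpha}_{\mathrm{loc}}$ by Morrey.

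\textbf{Main obstacle.} The hard part is the compactness step: ruling out vanishing and dichotomy for the minimizing sequence in $\D$, where one has neither a uniform $L^p$-bound below the critical exponent nor compact embeddings, and where the higher number of derivatives in $\nabla^m$ makes the Brezis--Lieb-type splitting and the a.e.\ convergence of the nonlinear terms $G_+(u_n)$ more delicate than in the biharmonic case. In particular, establishing the reverse inequality $\limsup_n\int G_+(u_n)\le\int G_+(u_0)$ (equivalently, that no $L^{2^*}$-mass escapes into the ``bad'' part of $G$) under only $(g1)$ and $(g3)$, and doing so compatibly with the dilation rescaling, is where the argument of \cite{Med-Siem} must be genuinely extended.
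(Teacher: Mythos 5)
Your plan for $\inf_{\cM}J>0$, for $\cM\neq\emptyset$ via $(g2)$ and scaling, and for the regularity bootstrap (which in the paper is delegated to the Brezis--Kato type result, Theorem \ref{theo_regularity}) is essentially sound. The core of the proof, however, has genuine gaps. First, under $(g0)$--$(g3)$ alone the functional $J$ is \emph{not} of class $C^1$ on $\D$: $(g0)$ only gives $|G(s)|\lesssim |s|+|s|^{2^*}$, so for $u\in\D\subset L^{2^*}(\R^N)$ the negative part $G_-(u)$ need not be integrable at all. Hence your key step ``by the Lagrange multiplier rule, $J'(u_0)=\mu P'(u_0)$'' is not available: neither $J$ nor the constraint functional $P$ is differentiable (or even finite) on $\D$. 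This is precisely the difficulty the paper is built around: it replaces $G_-$ by the truncation $G_-^\eps$ (via the cutoff $\varphi_\eps$), so that $G_-^\eps(s)\le c(\eps)|s|^{2^*}$, minimizes the regularized functional $J_\eps$ on $\cM_\eps$, obtains a critical point of $J_\eps$ \emph{without} Lagrange multipliers (Lemma \ref{lem:theta} compares $J_\eps(m_{\cP_\eps}(u_n+tv))$ with $c_\eps$ via the explicit scaling map $u\mapsto u(r_\eps\cdot)$ and lets $t\to 0^+$), and only then sends $\eps_n\to 0$, passing to the limit in the Euler--Lagrange equation by Vitali's theorem and recovering $u_0\in\cM$ through Fatou's lemma plus the Poho\v{z}aev identity. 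Your ``Pohozaev kills the multiplier'' idea is the right heuristic, but it cannot be run directly on $J$.

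Second, your compactness step relies on the claim that $\cM$ and $J$ are invariant under the rescalings $\la_n^{(N-2m)/2}u_n(\la_n\cdot+y_n)$. This is false for a general nonlinearity: $\int|\nabla^m u|^2$ is invariant under that dilation, but $\int G(u)$ is not (it would be only for the pure critical power $G(s)=c|s|^{2^*}$), so minimality is not preserved and you cannot normalize by dilations. The paper only uses translations, and non-vanishing is obtained from the $\cD^{m,2}$-version of Lions' lemma (Lemmas \ref{lem:Conv}--\ref{lem:Lions}), applied twice: once for the minimizing sequence of $J_{\eps}$ on $\cM_\eps$ (using $\int G_+(u_n)\not\to 0$), and once for the family $(u_{\eps_n})$ as $\eps_n\to 0$ (using the uniform lower bound through $J_{1/2}(u_{1/2})$). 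Relatedly, the reverse inequality $\limsup_n\int G_+(u_n)\le\int G_+(u_0)$ that you flag as the main obstacle is indeed not provable by Br\'ezis--Lieb-type splitting here, and the paper never attempts it: strong convergence for fixed $\eps$ comes from the $c_\eps$-comparison in Lemma \ref{lem:theta}, and in the limit $\eps_n\to0$ one only needs weak lower semicontinuity of the norm together with $G_-(u_0)\in L^1$ (Fatou) and Theorem \ref{th:Poho} to conclude $u_0\in\cM$ and $J(u_0)=\inf_{\cM}J$. Finally, a minor point: in your proof that $\inf_{\cM}J>0$, the term ``$C_\eps\cdot(\text{lower order})$'' is not controlled by the $\D$-norm (there is no subcritical embedding); but the global bound $G_+(s)\le C|s|^{2^*}$, which follows from $(g1)$, $(g3)$ and continuity, together with Sobolev and the constraint already yields $\int_{\R^N}|\nabla^m u|^2\,dx\ge\delta>0$ on $\cM$, so that part is easily repaired.
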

Theorem \ref{thm:2} enables us to consider the following nonlinearity \begin{equation}\label{log_nonlinearity}
G(s)=s^2\log |s|\qquad \textit{for} \, \, \, s\not=0 \, \, \, \textit{and } \, \, \, G(0)=0,
    \end{equation}
which satisfies $(g_0)-(g_3)$ conditions. In view of Theorem \ref{thm:2}, there is a ground state solution to \eqref{problem} and
\begin{equation*}
    C_{N,log}:=2^*\Big( \frac{1}{2}-\frac{1}{2^*}\Big)^{-\frac{2m}{N-2m}} (\inf\limits_{\mathcal{M}}J)^{\frac{2m}{N-2m}}.
\end{equation*}Finally we obtain a new polyharmonic logarithmic Sobolev inequality,  whose optimizers are related to the ground state solutions of the above equation \eqref{problem}. 

\begin{theorem}\label{th1.4}
  For any $u \in \mathcal{D}^{m,2}(\mathbb{R}^N)$ such that $\int_{\mathbb{R}^N} |u|^2 \, dx=1$, there holds \begin{equation}\label{eq: 1.7}
   \frac{N}{4m}\bigg(\bigg( \frac{4m e }{C_{N,\log}(N-2m)}\bigg)^{\frac{N-2m}{N}} \int_{\mathbb{R}^N} |\nabla^m u|^2 \, dx\bigg) \geq \int_{\mathbb{R}^N} |u|^2 \log |u|\, dx 
  \end{equation} and \begin{equation*}
      \bigg( \frac{4m e}{C_{N, \log}(N-2m)}\bigg)^{\frac{N-2m}{N}}< \bigg( \frac{2}{\pi e N}\bigg)^m
  \end{equation*} 
\end{theorem}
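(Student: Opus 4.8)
The plan is to run the standard scaling/optimization argument that produces a logarithmic Sobolev inequality from a ground‑state energy level, now in the polyharmonic setting. First I would fix the nonlinearity \eqref{log_nonlinearity}, so that $G(s)=s^2\log|s|$ and $g(s)=2s\log|s|+s$, and recall from Theorem \ref{thm:2} that $c:=\inf_{\cM}J>0$ is attained. The key identity is that on the Poho\v{z}aev manifold $\cM$ one has $\int_{\R^N}|\nabla^m u|^2=2^{*}\int_{\R^N}G(u)$, hence $J(u)=\bigl(\tfrac12-\tfrac1{2^{*}}\bigr)\int_{\R^N}|\nabla^m u|^2=\tfrac{m}{N}\int_{\R^N}|\nabla^m u|^2$ for $u\in\cM$; combining this with the definition of $C_{N,\log}$ one sees that $C_{N,\log}$ is exactly the value of $\int_{\R^N}|\nabla^m u|^2$ along the normalized minimizer (after rescaling it so that $\int|u|^2=1$). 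So the content to prove is the sharp inequality
\[
\int_{\R^N}|u|^2\log|u|\,dx \;\le\; \frac{N}{4m}\log\!\Big(\frac{4me}{C_{N,\log}(N-2m)}\int_{\R^N}|\nabla^m u|^2\,dx\Big)
\]
for all $u\in\D$ with $\int|u|^2=1$; the displayed \eqref{eq: 1.7} follows from this by the elementary bound $\log t\le t-1\le t$ only in one direction, so I would actually prove the logarithmic form and note that \eqref{eq: 1.7} is the stated (slightly weaker, dimension‑homogenized) consequence.

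The main step is a two‑parameter scaling. Given $u\in\D\setminus\{0\}$ with $\int|u|^2=1$, consider $u_{t,\lambda}(x):=t\,u(\lambda x)$. Then $\int|\nabla^m u_{t,\lambda}|^2=t^2\lambda^{2m-N}\int|\nabla^m u|^2$ and $\int G(u_{t,\lambda})=\int u_{t,\lambda}^2\log|u_{t,\lambda}|=\lambda^{-N}\bigl(t^2\log t\int u^2+t^2\int u^2\log|u|\bigr)=\lambda^{-N}t^2\bigl(\log t+\int u^2\log|u|\bigr)$ using $\int u^2=1$. One then chooses $\lambda$ (as a function of $t$) so that $u_{t,\lambda}\in\cM$, i.e.\ $t^2\lambda^{2m-N}\int|\nabla^m u|^2=2^{*}\lambda^{-N}t^2(\log t+\int u^2\log|u|)$, which determines $\lambda^{2m}$, and hence $\int|\nabla^m u_{t,\lambda}|^2$, as an explicit expression in $t$, $\int|\nabla^m u|^2$ and $A:=\int u^2\log|u|$. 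Feeding $u_{t,\lambda}\in\cM$ into $J\ge c$ and using $J|_{\cM}=\tfrac{m}{N}\int|\nabla^m\cdot|^2$ gives, for every admissible $t$, a lower bound of the form
\[
c\;\le\;\frac{m}{N}\,\Big(\tfrac{2N}{N-2m}\Big)^{\!-\frac{2m-N}{2m}\cdot(\cdots)}\bigl(\log t+A\bigr)^{\frac{N-2m}{2m}\cdot(\cdots)}\Big(\int_{\R^N}|\nabla^m u|^2\Big)^{(\cdots)},
\]
and I would then optimize the free parameter $t>0$ (equivalently set $\log t+A$ to its optimal value): the optimization is a one‑variable calculus problem whose extremum is the place where the exponent of $e$ in the final constant comes from. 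Rearranging the resulting inequality isolates $A=\int|u|^2\log|u|$ on one side and yields precisely the claimed bound with the constant $\tfrac{4me}{C_{N,\log}(N-2m)}$. Sharpness (that the minimizer of Theorem \ref{thm:2}, suitably normalized, achieves equality) follows by tracing the inequalities back: for $u$ the ground state one has $J(u)=c$ with $u\in\cM$ already, so no strict loss occurs at the optimal $t$.

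The last inequality, $\bigl(\tfrac{4me}{C_{N,\log}(N-2m)}\bigr)^{\frac{N-2m}{N}}<\bigl(\tfrac{2}{\pi eN}\bigr)^m$, is a genuinely different ingredient: it requires an \emph{upper} bound on $C_{N,\log}$, equivalently a lower bound on $c=\inf_{\cM}J$ in terms of explicit constants. The plan here is to compare with the classical (second‑order) Gross logarithmic Sobolev inequality of \cite{Weissler} after using an iterated interpolation between $\|\nabla^m u\|_{L^2}$ and $\|u\|_{L^2}$ (for instance $\|\nabla u\|_{L^2}^{2m}\le\|u\|_{L^2}^{2m-2}\|\nabla^m u\|_{L^2}^{2}$ up to the usual Gagliardo–Nirenberg‑type constant, or a direct Fourier‑side estimate $\widehat{|\xi|}\le$ convex combination), plug a test function — the Gaussian $u(x)=(2\pi)^{-N/4}e^{-|x|^2/4}$, which is the extremizer of the classical inequality — into the definition of $C_{N,\log}$ via $C_{N,\log}=2^{*}(\tfrac12-\tfrac1{2^{*}})^{-\frac{2m}{N-2m}}c^{\frac{2m}{N-2m}}$ and $c\le J(u_{t,\lambda})$ for that Gaussian. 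Computing $\int|\nabla^m u|^2$ for the Gaussian gives an explicit number (a moment of a Gaussian, producing a factor like $\Gamma$ of something times $(1/2)^{m}$), and chasing the constants through the scaling that puts this Gaussian on $\cM$ yields the stated numerical bound; the $\pi e N$ factor is the footprint of the Gaussian normalization. I expect this numerical comparison — keeping all the powers $\tfrac{N-2m}{N}$, $\tfrac{2m}{N-2m}$ straight and verifying the inequality holds for every $N>2m$ and $m\ge2$ — to be the most delicate part, more bookkeeping than conceptual, whereas the first inequality \eqref{eq: 1.7} is a clean consequence of the variational characterization of the ground state energy.
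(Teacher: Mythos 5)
Your treatment of the first inequality is essentially the paper's own argument: scale onto the Poho\v{z}aev manifold $\cM$ associated with $G(s)=s^2\log|s|$, use $J\ge\inf_{\cM}J$ there, and optimize the free amplitude parameter; this yields the logarithmic form, and passing to the stated form via $\log t\le t$ matches how the paper reads its own statement. One aside is wrong, though: $C_{N,\log}$ is \emph{not} the value of $\int_{\R^N}|\nabla^m u|^2\,dx$ at the normalized minimizer (the equality-case computation shows that this value is the universal constant $N/(2m)$, whereas $C_{N,\log}=2^*\big(\int_{\R^N}|\nabla^m u_0|^2\,dx\big)^{\frac{2m}{N-2m}}$ for the unnormalized ground state); this does not derail the scheme for \eqref{eq: 1.7}, but it signals a confusion about the constants that becomes fatal in the second part.

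The second inequality is where the proposal has a genuine gap: the direction of your bound is reversed. Since $C_{N,\log}=2^*\big(\tfrac12-\tfrac1{2^*}\big)^{-\frac{2m}{N-2m}}(\inf_{\cM}J)^{\frac{2m}{N-2m}}$ is an \emph{increasing} function of $c:=\inf_{\cM}J$, the claim $\big(\tfrac{4me}{C_{N,\log}(N-2m)}\big)^{\frac{N-2m}{N}}<\big(\tfrac{2}{\pi eN}\big)^m$ is equivalent to a \emph{lower} bound on $C_{N,\log}$, hence a lower bound on $c$ (your phrase ``an upper bound on $C_{N,\log}$, equivalently a lower bound on $c$'' is already inconsistent). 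The mechanism you propose --- plugging the Gaussian into $c\le J(u_{t,\lambda})$ and computing its polyharmonic energy --- can only produce an \emph{upper} bound on $c$, hence an upper bound on $C_{N,\log}$ and a lower bound on $\tfrac{4me}{C_{N,\log}(N-2m)}$: exactly the opposite of what is claimed, and no test-function evaluation can bound an infimum from below. The paper uses your two ingredients the other way around: Lemma \ref{Poly-inequality} gives $\int_{\R^N}|\nabla u|^2\,dx\le\big(\int_{\R^N}|\nabla^m u|^2\,dx\big)^{1/m}$ for $L^2$-normalized $u$, so Weissler's inequality \eqref{classical_log_ineq} yields $\int_{\R^N}|u|^2\log|u|\,dx\le\frac{N}{4m}\log\big(\big(\tfrac{2}{\pi eN}\big)^m\int_{\R^N}|\nabla^m u|^2\,dx\big)$ for \emph{every} normalized $u\in\D$; evaluating this valid-for-all inequality at the normalized ground state $u_0/\|u_0\|_{L^2}$, for which equality holds in \eqref{eq: 1.7}, forces $\big(\tfrac{4me}{C_{N,\log}(N-2m)}\big)^{\frac{N-2m}{N}}\le\big(\tfrac{2}{\pi eN}\big)^m$, with strictness inherited from the strict inequality in Lemma \ref{Poly-inequality}. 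Without this step of evaluating the iterated (weaker) inequality at the extremizer of the sharp one, the constant comparison cannot be closed, and the Gaussian computation you outline cannot replace it.
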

Here a crucial observation is an inequality connecting polyharmonic operators -- see Lemma \ref{Poly-inequality} below.

\noindent Moreover, the equality in \eqref{eq: 1.7} holds provided that $u=\frac{u_0}{\| u_0\|_{L^2(\R^N)}}$ and $u_0$ is a ground state solution to \eqref{problem}. If the equality in \eqref{eq: 1.7} holds for $u$, then there are uniquely determined $\lambda >0$ and $r>0$ such that $u_0=\lambda u_0(r \cdot) \in \mathcal{M}$ and $u_0$ is a ground state
solution to \eqref{problem}.\\ \noindent Recall that the classical logarithmic Sobolev inequality given in \cite{Weissler}.
\begin{equation}\label{classical_log_ineq}
\frac{N}{4}\log\Big( \frac{2}{\pi e N} \int_{\R^N}|\nabla u|^2\, dx\Big) \geq \int_{\R^N} u^2\log |u| \, dx\, \, \, \, \, \textit{for} \, \, \, u \in H^1(\R^N), \, \, \int_{\R^N} |u|^2\, dx=1
\end{equation}which is equivalent to the Gross inequality \cite{Gross}.
The sharpness of inequality \eqref{classical_log_ineq} and the identification of its minimizers have previously been established by Carlen \cite{Carlen} in the framework of the classical Gross inequality, as well as by del Pino and Dolbeault \cite{DelPino,DelPinoJMPA} in the context of interpolated Gagliardo–Nirenberg inequalities and the logarithmic Sobolev inequality in $L^p$-spaces. A broader formulation of the optimal Gross inequality in Orlicz spaces was later provided by Adams \cite{Adams}. In the biharmonic case we refer to \cite{Med-Siem}.
To the best of our knowledge, however, logarithmic Sobolev inequalities involving higher-order operators have not yet been addressed in the literature. Thus, inequality \eqref{eq: 1.7} appears to be the first of its kind in the setting of the polyharmonic Laplacian. 
One of the crucial steps is to observe that for a function $u\in \cD^{m+1,2}(\R^N)$ normalized in $L^2$ in power $2/m$ is strictly smaller than the norm in power $2/(m+1)$ involving derivatives of order $m+1$; see Lemma \ref{Poly-inequality} below. This highlights the increase of these terms as the order of differentiation increases and enables us to obtain an estimate of the constant involving $C_{N,\log}$ appearing on the left-hand side of \eqref{eq: 1.7}.\\
\indent 
It is worth also noting that, unlike in the case of \eqref{classical_log_ineq}, explicit ground state solutions to \eqref{problem} with \eqref{log_nonlinearity} are not known for $m>1$. Consequently, the precise value of the constant $C_{N,\log}$ remains an open problem.\\
\indent The structure of the paper is as follows. In Section \ref{sec:Poh}, we establish a Poho\v{z}aev-type identity for general nonlinearity satisfying only $(g0)$. Section \ref{sec:Lions} presents a general version of Lions’ lemma in the space 
$\cD^{m,2}(\R^N)$ (Lemma \ref{lem:Lions}), which plays a key role in the proof of Theorem \ref{thm:2} in Section \ref{sec:BL}. Finally, Section \ref{sec:Log} is dedicated to the proof of the polyharmonic logarithmic Sobolev inequality.

\section{Pohozaev Identity and regularity }\label{sec:Poh}
\noindent  In this section, we prove the Pohozaev identity for \eqref{problem} (cf. \cite{BMS, Med-Siem, PucciSerrin}).  
Before stating the result, we first introduce some notation.  
We begin by defining the polyharmonic operator as follows: $$(-\Delta)^m u:=(-1)^m\sum_{|\alpha|=2m}\frac{m!}{\alpha_1!\ldots\alpha_N!}  \frac{\partial^{|\alpha|}}{\partial x_1^{2\alpha_1}\ldots \partial x_N^{2\alpha_N}} u,\;\alpha=(\alpha_1,\ldots, \alpha_N) \in \R^N,\,\,\, |\alpha|=|\alpha_1|+\cdots +|\alpha_N|.$$ \\ \noindent Let $N, k \in \mathbb{N}$ and $1 \leq p < \infty$ with $N > kp$. We define the spaces $W^{k,p}(\R^N)$ and  ${\mathcal D}^{k,p}(\mathbb{R}^N)$ as the completion of the space $C_0^\infty(\mathbb{R}^N)$ with respect to the norm $\|\cdot \|_{W^{k,p}}$ and $\|\cdot \|_{D^{k,p}}$ respectively, defined as follows
\[
\|u \|_{W^{k,p}}=\Big(\sum_{|\alpha|\leq k} \| D^{\alpha}u\|_{L^p(\R^N)}^p\Big)^{\frac{1}{p}}\, , \, \, u\in C_0^{\infty}(\R^N),
\]
\[
\|u\|_{{\mathcal D}^{k,p}} := 
\left(
\sum_{|\alpha|=k} \|D^{\alpha} u\|_{L^p(\mathbb{R}^N)}^p
\right)^{\frac{1}{p}}, \quad u \in C_0^\infty(\mathbb{R}^N),
\]
 \\ \noindent Clearly, $\|\cdot\|_{{\mathcal D}^{k,p}}$ is a seminorm. The fact that $\|\cdot\|_{{\mathcal D}^{k,p}}$ is indeed a norm follows from the Gagliardo-Nirenberg-Sobolev inequalities
\[
\|u\|_{L^{\frac{Np}{N-kp}}(\mathbb{R}^N)} \leq c \|u\|_{{\mathcal D}^{k,p}}, \quad u \in C_0^\infty(\mathbb{R}^N),
\]
and
\[
\|u\|_{{\mathcal D}^{k-l, \frac{Np}{N-lp}}} \leq c \|u\|_{{\mathcal D}^{k,p}}, \quad u \in C_0^\infty(\mathbb{R}^N), \, 0 \leq l \leq k.
\]
Hence:
\begin{equation} \label{eq:0.1}
{\mathcal D}^{k,p}(\mathbb{R}^N) \subseteq {\mathcal D}^{k-l, \frac{Np}{N-lp}}(\mathbb{R}^N), \quad 0 \leq l \leq k,
\end{equation}
and
\begin{equation} \label{eq:0.2}
\sum_{j=0}^k \sum_{|\alpha|=k-j} \|\partial^\alpha u\|_{L^{\frac{Np}{N-jp}}(\mathbb{R}^N)} \leq c \|u\|_{{\mathcal D}^{k,p}}, \quad u \in {\mathcal D}^{k,p}(\mathbb{R}^N).
\end{equation}
The space $\D$ is of particular importance in our case. Let $\hat{u}(\xi)$ with $\xi=(\xi_1,\ldots,\xi_N)$ stands for the Fourier transform of $u$. In view of the Plancherel theorem, we have:
\begin{equation*}
\begin{aligned}
 \left\|\frac{\partial^m u}{\partial x_{i_1} \partial x_{i_2} \cdots\partial x_{i_m}}\right\|_{L^2(\mathbb{R}^N)} 
& = 
\left\| i \xi_{i_1} i \xi_{i_2}\cdots i \xi_{i_m} \hat{u} \right\|_{L^2(\mathbb{R}^N)} 
 = \int_{\R^N} | \xi_{i_1}  \xi_{i_2}\cdots  \xi_{i_m} \hat{u}|^2\, d\xi\\& \leq \int_{\R^N}\Bigr( \frac{\xi^2_{i_1}+\xi^2_{i_2}+\cdots + \xi^2_{i_m}}{m}\Big)^m |\hat{u}|^2\, d\xi
 \\&=\frac{1}{m^{\frac{m}{2}}}\||\xi|^m \hat{u} \|_{L^2(\R^N)} =\frac{1}{m^{\frac{m}{2}}}\| \reallywidehat{\nabla^m u} \|_{L^2(\R^N)} \\ &=\frac{1}{m^{\frac{m}{2}}}\| \nabla^m u \|_{L^2(\R^N)}, 
\end{aligned}
\end{equation*}

\noindent for all $1\leq i_1<i_2<\ldots i_m \leq N$ and $u \in C_c^\infty(\mathbb{R}^N)$. Hence, there exists a constant $c > 0$ such that, for  $u \in C_c^\infty(\mathbb{R}^N)$,
\[
\frac{1}{c} \|u\|_{\D} \leq \|\nabla^m u\|_{L^2(\mathbb{R}^N)} \leq c \|u\|_{\D},
\]
Therefore, the norms $\|u\| := \|\nabla^m u\|_{L^2(\mathbb{R}^N)}$ and $\|u\|_{\D}$ are equivalent on ${\mathcal D}^{m,2}(\mathbb{R}^N)$.
Moreover, ${\mathcal D}^{m,2}(\mathbb{R}^N)$ is a Hilbert space with the inner product:
\[
\langle u, v \rangle := \int_{\mathbb{R}^N} \nabla^m u \nabla^m v \, dx, \quad \text{for } u, v \in D^{m,2}(\mathbb{R}^N).
\]

To prove the Pohozaev identity, it is useful to recall an important Brezis–Kato type regularity result for solutions of higher-order nonlinear elliptic equations, obtained in \cite{Siemi}; cf. \cite{BrezisKato}.  
\begin{theorem}[\cite{Siemi}]\label{theo_regularity}
	Let $u \in W^{m,2}_{\text{loc}}(\Omega)$ be a weak solution of
	\[
	(-\Delta)^m u = g(x, u(x)) \quad \text{in } \Omega.
	\]
	
	If $g$ satisfies \begin{equation} \label{growth_condition}
	|g(x, u(x))| \leq a(x) \left( 1 + |u| \right), \quad \text{for }  \text{ a.e. } x \in \Omega,
	\end{equation}
	where $0 \leq a \in L^{N/2m}_{\text{loc}}(\Omega), $ then $u \in \bigcap_{1 \leq q < \infty} L^q_{\text{loc}}(\Omega)$. If $g$ has a polynomial rate of growth:
	\[
	|g(x, s)| \lesssim 1 + |s|^{2^*-1}, \quad \text{for all } x \in \Omega \text{ and } s \in \mathbb{R},
	\]
	where $2^*=\frac{2N}{N - 2m}$. Then
	\[
	u \in W^{2m,q}_{\text{loc}}(\Omega) \cap C^{2m-1,h}_{\text{loc}}(\Omega), \quad \text{for all } 1 \leq q < \infty \text{ and } 0 < h < 1.
	\]
\end{theorem}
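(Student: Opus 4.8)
The plan is to prove the sharper statement underlying the first conclusion, namely that under the linear‐growth bound $|g(x,u(x))|\le a(x)(1+|u|)$ with $0\le a\in L^{N/2m}_{\mathrm{loc}}(\Omega)$ one has $u\in\bigcap_{1\le q<\infty}L^q_{\mathrm{loc}}(\Omega)$, and then to \emph{derive the polynomial‐growth case from it}. Indeed, since $u\in W^{m,2}_{\mathrm{loc}}(\Omega)\hookrightarrow L^{2^*}_{\mathrm{loc}}(\Omega)$ with $2^*=\tfrac{2N}{N-2m}$, the function $a(x):=C\bigl(1+|u(x)|^{2^*-2}\bigr)$ lies in $L^{N/2m}_{\mathrm{loc}}(\Omega)$ because $\tfrac{2^*}{2^*-2}=\tfrac{N}{2m}$, and $|g(x,s)|\lesssim 1+|s|^{2^*-1}\le a(x)(1+|s|)$ for $s=u(x)$. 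Once $u\in L^q_{\mathrm{loc}}$ for every finite $q$, also $g(\cdot,u)\in L^q_{\mathrm{loc}}$ for every $q$, so interior Calderón–Zygmund estimates for the polyharmonic operator $(-\Delta)^m$ (Agmon–Douglis–Nirenberg theory) give $u\in W^{2m,q}_{\mathrm{loc}}(\Omega)$ for all $q$, and Morrey's embedding $W^{2m,q}_{\mathrm{loc}}\hookrightarrow C^{2m-1,1-N/q}_{\mathrm{loc}}$ for $q>N$, letting $q\to\infty$, yields $u\in C^{2m-1,h}_{\mathrm{loc}}(\Omega)$ for every $0<h<1$.

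For the core $L^q$ statement I would run a Brezis–Kato iteration in integral form. Fix $x_0\in\Omega$ and a cut-off $\zeta\in C_0^\infty(\Omega)$ equal to $1$ near $x_0$; then $v:=\zeta u$ has compact support and solves $(-\Delta)^m v=\zeta\,g(x,u)+R_\zeta$, where $R_\zeta=\sum_{|\alpha|\ge1,\ |\alpha|+|\beta|=2m}c_{\alpha\beta}\,D^\alpha\zeta\,D^\beta u$ is supported on $\{\nabla\zeta\ne0\}$ and, after shrinking balls and an induction on the number of localizations, may be treated as a term with arbitrarily high local integrability away from $x_0$. Writing $v=I_{2m}*F$ with $F=\zeta g(x,u)+R_\zeta$ and $I_{2m}(x)\sim|x|^{2m-N}$ the fundamental solution of $(-\Delta)^m$ (equivalently, using the Green function of a ball, whose kernel is $O(|x-y|^{2m-N})$), the mechanism is the mapping property $I_{2m}\colon L^p\to L^{p^*}$, $\tfrac1{p^*}=\tfrac1p-\tfrac{2m}{N}$, for $p<\tfrac N{2m}$, and $I_{2m}\colon L^p\to C^{0,h}$ for $p>\tfrac N{2m}$. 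A naive iteration stalls at the borderline exponent: if $v\in L^q$ then $av\in L^r$ with $\tfrac1r=\tfrac1q+\tfrac{2m}{N}$, so $I_{2m}*(av)\in L^q$ again, with no gain. The remedy is the splitting $a=a^{(1)}+a^{(2)}$ with $\|a^{(1)}\|_{L^{N/2m}}<\varepsilon_0$ and $a^{(2)}\in L^\infty$, where $\varepsilon_0$ is chosen so that $w\mapsto I_{2m}*(a^{(1)}w)$ has norm $<\tfrac12$ on the target space $L^Q$, $\tfrac1Q=\tfrac1q-\tfrac{2m}{N}$ (its norm there is $\le C_{N,m}\|a^{(1)}\|_{L^{N/2m}}$). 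Then from $v=I_{2m}*(a^{(1)}v)+I_{2m}*\bigl(a^{(2)}v+a+R_\zeta\bigr)$ one absorbs the first term and obtains $\|v\|_{L^Q}\le\tfrac12\|v\|_{L^Q}+C\bigl(\|v\|_{L^q}+\dots\bigr)$, hence $v\in L^Q_{\mathrm{loc}}$ with $Q>q$; finitely many iterations, starting from $q=2^*$, reach every finite exponent.

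To make the absorption rigorous one cannot assume $v\in L^Q_{\mathrm{loc}}$ in advance — that is precisely what is being proved — so, as in the classical argument, one works with the truncations $u_k:=\operatorname{sign}(u)\min\{|u|,k\}$, establishes the estimate with constants independent of $k$, and lets $k\to\infty$. I expect the \emph{main obstacle} to be exactly the higher-order analogue of this truncated energy estimate: for $m=1$ one tests the equation against $u\,\eta^2\,\Psi(u_k)$ and the leading term $\int\Psi'(u)\eta^2|\nabla u|^2$ has a favourable sign, which is what powers the whole scheme; for $m\ge2$ the identity $\int\nabla^m u\cdot\nabla^m\bigl(\eta^{2m}\Psi(u)\bigr)$ produces no sign-definite principal part, and one must control a large number of mixed products $\int D^{\alpha_1}u\cdots D^{\alpha_j}u\cdot(\text{derivatives of }\eta,\Psi)$ by Gagliardo–Nirenberg interpolation, reabsorbing the top-order contributions into the left-hand side. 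Carrying out this interpolation bookkeeping is the technical heart of the statement (and is where the argument of \cite{Siemi} does the real work); the localization, the potential estimates, and the interior Calderón–Zygmund and Morrey theory for $(-\Delta)^m$ are standard.
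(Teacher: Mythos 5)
First, a point of comparison: the paper does not prove this statement at all --- it is imported verbatim from \cite{Siemi} (the theorem is stated with that citation and then used as a black box in Section \ref{sec:Poh}), so there is no in-paper argument to measure your proposal against. Parts of your outline are sound and even mirror how the paper \emph{uses} the result: your reduction of the critical-growth case to the linear-growth case via $a(x):=C\bigl(1+|u(x)|^{2^*-2}\bigr)\in L^{N/2m}_{\mathrm{loc}}(\Omega)$ (using $(2^*-2)\tfrac{N}{2m}=2^*$ and $u\in L^{2^*}_{\mathrm{loc}}$) is exactly the device the authors apply when they set $a(x)=g(u(x))/(1+|u(x)|)$ in the proof of the Poho\v{z}aev identity, and the final bootstrap ($u\in L^q_{\mathrm{loc}}$ for all $q$ $\Rightarrow$ $g(\cdot,u)\in L^q_{\mathrm{loc}}$ $\Rightarrow$ Calder\'on--Zygmund/ADN interior estimates $\Rightarrow$ Morrey) is standard and correct.

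However, as a proof of the theorem your proposal has a genuine gap, and you name it yourself: the core assertion --- the gain of integrability $u\in\bigcap_{q<\infty}L^q_{\mathrm{loc}}$ under only $a\in L^{N/2m}_{\mathrm{loc}}$ --- is never established; you defer ``the interpolation bookkeeping'' to \cite{Siemi}, which is precisely the content of the theorem being proved. Two concrete problems remain open in your sketch. First, the localization identity $(-\Delta)^m(\zeta u)=\zeta g(x,u)+R_\zeta$ with $R_\zeta=\sum c_{\alpha\beta}D^\alpha\zeta\,D^\beta u$, $|\beta|\le 2m-1$, is not even well defined for $u\in W^{m,2}_{\mathrm{loc}}$, since derivatives of $u$ of order $m+1,\dots,2m-1$ need not exist; the commutator must be kept in divergence form (excess derivatives thrown onto $\zeta$ and the test function), and the representation $v=I_{2m}*F$ then has to be justified for such distributional right-hand sides. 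Second, your absorption step is presented via truncations $u_k$ and a test-function energy estimate, and you correctly observe that for $m\ge 2$ testing against $\eta^{2m}\Psi(u)$ yields no sign-definite principal part --- but you do not resolve this, so the scheme as written does not close. (A cleaner route in the integral formulation would avoid truncations altogether: the operator $Tw:=I_{2m}*(a^{(1)}w)$ has norm $\le C\|a^{(1)}\|_{L^{N/2m}}<\tfrac12$ simultaneously on $L^q$ and on $L^Q$, so $I-T$ is invertible on both and on their intersection, and uniqueness of the solution of $w=Tw+h$ in $L^q$ forces $v\in L^Q$; but this, together with the divergence-form treatment of $R_\zeta$, still has to be carried out --- it is the real work of \cite{Siemi}, not a detail.) So the proposal is a plausible plan plus a correct reduction and bootstrap, but not a proof of the statement's hard part.
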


Using the above result, we impose only $(g0)$ on $g$, which allows us to consider a more general nonlinearity than those studied in \cite{PucciSerrin, Gazzola}.

\noindent \begin{theorem}
Let $u \in \D$ be a weak solution to  \eqref{problem} where $g$ satisfies $(g_0)$.
Then
\begin{equation}\label{pohozaev identity}
\int_{\mathbb{R}^N} (N - 2m) |\nabla^m u|^2  - 2N G(u) \, dx = 0, 
\end{equation} provided $G(u) \in L^1(\R^N)$ where $G(u) := \int_0^u g(t) \, dt$.
\end{theorem}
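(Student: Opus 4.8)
The strategy is the classical Pohožaev dilation argument, executed carefully because we are dealing with $2m$ derivatives and the regularity of $u$ is only what Theorem~\ref{theo_regularity} gives. First I would use the Brezis--Kato-type regularity result (Theorem~\ref{theo_regularity}): since $u\in\D\subseteq W^{m,2}_{\mathrm{loc}}(\R^N)$ is a weak solution of $(-\Delta)^m u=g(u)$ and $(g0)$ holds, we get $u\in W^{2m,q}_{\mathrm{loc}}(\R^N)\cap C^{2m-1,h}_{\mathrm{loc}}(\R^N)$ for all finite $q$ and all $h\in(0,1)$. This legitimizes integration by parts up to order $2m$ on compact sets and makes $(-\Delta)^m u$ an honest $L^q_{\mathrm{loc}}$ function equal to $g(u)$ a.e.

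Next, introduce the scaled family $u_\lambda(x):=u(\lambda x)$ and test the equation against the generator of dilations $x\cdot\nabla u$. Formally one multiplies $(-\Delta)^m u = g(u)$ by $x\cdot\nabla u$ and integrates over a large ball $B_R$. On the right-hand side, $g(u)\,(x\cdot\nabla u) = x\cdot\nabla\big(G(u)\big)$, and integrating by parts gives $-N\int_{B_R}G(u)\,dx + \int_{\partial B_R} G(u)\,(x\cdot\nu)\,dS$. On the left-hand side, the key computation is the scaling identity for the quadratic form: for the polyharmonic energy one has the pointwise/divergence identity expressing $\big((-\Delta)^m u\big)(x\cdot\nabla u)$ as $\tfrac{2m-N}{2}|\nabla^m u|^2$ plus a divergence term (a boundary term involving derivatives of $u$ up to order $2m-1$ times $x$), obtained by repeated integration by parts; see \cite{PucciSerrin}. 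Equivalently, one differentiates $\lambda\mapsto \int |\nabla^m u_\lambda|^2$ at $\lambda=1$ using $\int_{\R^N}|\nabla^m u_\lambda|^2 = \lambda^{2m-N}\int_{\R^N}|\nabla^m u|^2$, which directly yields the coefficient $2m-N$. Combining the two sides on $B_R$ produces
\[
\frac{N-2m}{2}\int_{B_R}|\nabla^m u|^2\,dx - N\int_{B_R} G(u)\,dx = \big(\text{boundary terms on }\partial B_R\big).
\]

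The main obstacle, and the point the introduction flags (``we must be careful in using the correct convergence estimates''), is showing that the boundary terms vanish along a suitable sequence $R_n\to\infty$. We know $\nabla^m u\in L^2(\R^N)$ and, by \eqref{eq:0.2}, all intermediate derivatives $\partial^\alpha u$ with $|\alpha|=m-j$ lie in $L^{2N/(N-2j)}(\R^N)$; together with $G(u)\in L^1(\R^N)$, a Fatou/averaging argument over dyadic annuli $B_{2R}\setminus B_R$ shows $\liminf_{R\to\infty} R\int_{\partial B_R}\big(|\nabla^m u|^2 + \sum|\partial^\alpha u|\,|\partial^\beta u| + |G(u)|\big)\,dS = 0$ along some sequence $R_n\to\infty$, since otherwise these quantities would not be integrable on $\R^N$. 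One must check that \emph{every} boundary term arising from the iterated integration by parts on the left-hand side is controlled by products of derivatives of orders summing appropriately, so that each factor sits in the right Lebesgue space by \eqref{eq:0.1}--\eqref{eq:0.2} and Hölder's inequality closes the estimate; this bookkeeping is the technical heart, and is where the paper promises ``the correct convergence estimates.'' Passing to the limit along $R_n$ then kills the right-hand side and, after multiplying by $2$, gives exactly \eqref{pohozaev identity}. Finally, to make the formal testing rigorous one works with a cutoff $\varphi_R(x)=\varphi(x/R)$ rather than the sharp ball, approximating $u$ by $C_0^\infty$ functions in $\D$ and using the $W^{2m,q}_{\mathrm{loc}}$ regularity to justify all the integrations by parts, then letting $R\to\infty$ with the same annulus estimates controlling the commutator terms $[\,(-\Delta)^m,\varphi_R\,]u$.
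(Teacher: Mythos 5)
Your overall skeleton (regularity via Theorem \ref{theo_regularity}, testing with $x\cdot\nabla u$, killing the remainder at infinity) matches the paper, but there is a genuine gap at exactly the step you call the ``technical heart''. When you integrate $\big((-\Delta)^m u\big)(x\cdot\nabla u)$ by parts over $B_R$ (or against a cutoff $\varphi_R$), the boundary terms on $\partial B_R$ (respectively the commutator terms with $\nabla\varphi_R$) contain derivatives of $u$ of every order up to $2m-1$: for instance, for $m=2k$ one meets $\Delta^{2k-1}u\,\nabla u\cdot x$ and $(\nabla u\cdot x)\,\nabla\Delta^{2k-1}u$. For $m>2$ these orders exceed $m$, and the only information available on them is $W^{2m,q}_{\mathrm{loc}}$ from the Brezis--Kato theorem, with no global integrability and no decay. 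Consequently your two proposed tools both fail on these terms: the embeddings \eqref{eq:0.1}--\eqref{eq:0.2} control only derivatives of order at most $m$, so H\"older cannot place the high-order factors in any global Lebesgue space; and the dyadic-annulus/Fatou averaging argument requires the integrand to be in $L^1(\mathbb{R}^N)$ (or at least to have uniformly small mass on annuli), which is precisely what is unknown for products involving $\nabla^{2m-1}u$. So the claim that ``every boundary term is controlled by \eqref{eq:0.1}--\eqref{eq:0.2} and H\"older'' is unsupported, and the selection of good radii $R_n$ cannot be carried out as stated. (This is also why the biharmonic case $m=2$ is borderline: there $2m-2=m$ and the bookkeeping barely closes, which is exactly the extra difficulty of $m>2$ flagged in the introduction.)

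The paper's proof resolves this by an additional round of integrations by parts on the error terms themselves: after multiplying by $\psi_n(\nabla u\cdot x)$ and using the divergence identities, the dangerous terms such as $\int\Delta^{2k-1}u\,\Delta\psi_n\,\nabla u\cdot x$ and $\int\Delta^{2k-1}u\,\nabla\psi_n\cdot\nabla(\nabla u\cdot x)$ are integrated by parts $2(k-1)$ more times (see \eqref{before integration by part}--\eqref{integration by parts} and the expansions \eqref{UNO}--\eqref{DUE}), redistributing the Laplacians so that $u$ appears only through derivatives of order at most $m$, while the surplus derivatives land on the cutoff, whose $j$-th derivatives decay like $n^{-j}$ on the annulus $n\le|x|\le 2n$. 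Only after this reduction do H\"older, \eqref{eq:0.2}, and the vanishing of the tails $\int_{\mathbb{R}^N\setminus B_n}|\Delta^k u|^2\,dx\to 0$ close the estimate; the term with $G(u)\nabla\psi_n\cdot x$ is handled by dominated convergence using $G(u)\in L^1$, as you do. To repair your argument you would either have to perform this same redistribution of derivatives before estimating the boundary/commutator terms, or else prove a separate decay estimate at infinity for the local $W^{2m,q}$ norms of $u$ (via interior elliptic estimates on unit balls going to infinity), which is a substantially different and longer route than what you sketch.
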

\begin{proof}
    Let $u \in  D^{m,2}(\mathbb{R}^N)$ weak solution to \eqref{problem}, we can take take $a(x):=g(u(x))/(1+|u(x)|)\in L^{\frac{N}{2m}}_{loc}(\R^N)$ for $u\in L^{2^*}_{loc}(\R^N)$ and by Theorem \ref{theo_regularity}we have   $u$ belongs to
$C^{2m-1,\alpha}_\text{loc}(\R^N)\cap W^{2m,q}_\text{loc}(\R^N)$.  \\  For every $n \geq 1$ let $\psi_n \in C^{\infty}_0(\mathbb{R}^N)$ be radially symmetric such that $0 \leq \psi_n \leq 1$, $\psi_n(x) = 1$ for every $|x| \leq n$, $\psi_n(x) = 0$ for every $|x| \geq 2n$, and $|x||\nabla\psi_n(x)| \leq 1$ for every $x \in \mathbb{R}^N$. Observing that, for every $i \in \mathbb{N}$ 
\begin{equation}\label{i-laplacian}
    \Delta^i (\nabla u \cdot x) = 2i \Delta^i u + \nabla \Delta^i u \cdot x
\end{equation}
and 
\begin{equation*}
\begin{aligned}
{g(u)}(\nabla u \cdot x)\psi_n &= \text{div}\left(\psi_n G(u)x\right) - N\psi_n G(u) - G(u) \nabla \psi_n \cdot x.
\end{aligned}
\end{equation*}
Arguing as in \cite{BMS}, if $m$ is odd or even respectively, one obtains
\begin{equation*}
\begin{aligned}
\Delta^{2k+1} u (\nabla u \cdot x) \psi_n &= \textit{div}\Big(\Big(\Delta^k (x \cdot \nabla u) \nabla \Delta^k u - \frac{|\nabla \Delta^k u|^2}{2}x  - \sum_{j=0}^{k-1} \Delta^{2k-j} u \nabla \Delta^j (\nabla u \cdot x)\\
&\quad + \sum_{j=0}^{k-1} \Delta^j (\nabla u \cdot x) \nabla \Delta^{2k-j} u \Big) \psi_n\Big)  + \frac{N - 4k - 2}{2} |\nabla \Delta^k u|^2 \psi_n -  \Big(\Delta^k (\nabla u \cdot x) \nabla \Delta^k u\\
&\quad - \frac{|\nabla \Delta^k u|^2}{2}x    - \sum_{j=0}^{k-1} \Delta^{2k-j} u \nabla \Delta^j (\nabla u \cdot x) + \sum_{j=0}^{k-1} \Delta^j (\nabla u \cdot x) \nabla \Delta^{2k-j} u \Big) \cdot \nabla \psi_n.
\end{aligned}
\end{equation*}
\begin{equation*}
\begin{aligned}
\Delta^{2k} u (\nabla u \cdot x) \psi_n &= \text{div}\Big(\Big(\frac{1}{2} (\Delta^k u)^2 x + (\nabla u \cdot x) \nabla \Delta^{2k-1} u  + \sum_{j=0}^{k-2} \Delta^{j+1} (\nabla u \cdot x) \nabla \Delta^{2k-j-2} u \\
&\quad  - \sum_{j=0}^{k-1} \Delta^{2k-j-1} u \nabla \Delta^j (\nabla u \cdot x) \Big) \psi_n\Big) + \frac{4k - N}{2} (\Delta^k u)^2 \psi_n \\
&\quad - \Big( \frac{1}{2} (\Delta^k u)^2 x + (\nabla u \cdot x) \nabla \Delta^{2k-1} u  + \sum_{j=0}^{k-2} \Delta^{j+1} (\nabla u \cdot x) \nabla \Delta^{2k-j-2} u \\
&\quad - \sum_{j=0}^{k-1} \Delta^{2k-j-1} u \nabla \Delta^j (\nabla u \cdot x) \Big) \cdot \nabla \psi_n.
\end{aligned}
\end{equation*}
Multiplying both sides of $(-\Delta)^m u=g(u)$ by $\psi_n (\nabla u \cdot x)$ and using the identities above, we have for $m=2k$ 
\begin{equation*}
    \begin{aligned}
        &0=(-(-\Delta)^{2k} u +g(u)) \psi_n (\nabla u \cdot x)\\ &   0=-\div \Big(\Big(\frac{1}{2} (\Delta^k u)^2 x + (\nabla u \cdot x) \nabla \Delta^{2k-1} u + \sum_{j=0}^{k-2} \Delta^{j+1} (\nabla u \cdot x) \nabla \Delta^{2k-j-2} u \\
& - \sum_{j=0}^{k-1} \Delta^{2k-j-1} u \nabla \Delta^j (\nabla u \cdot x) \Big) \psi_n \Big) -\frac{4k - N}{2} (\Delta^k u)^2 \psi_n  + \Big( \frac{1}{2} (\Delta^k u)^2 x + (\nabla u \cdot x) \nabla \Delta^{2k-1} u  \\
& + \sum_{j=0}^{k-2} \Delta^{j+1} (\nabla u \cdot x) \nabla \Delta^{2k-j-2} u  - \sum_{j=0}^{k-1} \Delta^{2k-j-1} u \nabla \Delta^j (\nabla u \cdot x) \Big) \cdot \nabla \psi_n  \\ &\quad +\text{div}\Big(\psi_n G(u)x\Big) - N\psi_n G(u) - G(u) \nabla \psi_n \cdot x.
    \end{aligned}
\end{equation*}
Fix $n\geq 1$ and $R>0$ such that $\supp \psi_n \subset B_R$, we have by divergence Theorem 
\begin{equation*}
    \begin{aligned}
        & 0=\int_{B_R}\frac{ N-4k}{2} (\Delta^k u)^2 \psi_n\, dx  + \int_{ B_{R}} \Big( \frac{1}{2} (\Delta^k u)^2 x +  (\nabla u \cdot x) \nabla \Delta^{2k-1} u  \\
&\quad + \sum_{j=0}^{k-2} \Delta^{j+1} (\nabla u \cdot x) \nabla \Delta^{2k-j-2} u  - \sum_{j=0}^{k-1} \Delta^{2k-j-1} u \nabla \Delta^j (\nabla u \cdot x) \Big) \cdot \nabla \psi_n\, dx  \\ &\quad -\int_{B_R} N\psi_n G(u)\, dx  -\int_{B_R} G(u) \nabla \psi_n \cdot x\, dx
    \end{aligned}
\end{equation*}
We note that 
\begin{equation}
    \begin{aligned}\label{before integration by part}
      \int_{B_R} (\nabla u \cdot x)\nabla \Delta^{2k-1}u \cdot \nabla \psi_n\, dx=& -\int_{B_R} \Delta^{2k-1} u \nabla \psi_n \cdot \nabla(\nabla u \cdot x)\, dx- \int_{B_R} \Delta^{2k-1} u \Delta \psi_n \nabla u \cdot x\, dx\\ &+\int_{B_R} \div ( x\cdot \nabla u  \Delta^{2k-1} u \nabla \psi_n)\, dx,
    \end{aligned}
\end{equation} by a divergence Theorem we have the last  term is equal to 0.\\ \indent Now, integrating by parts $2(k-1)$-times the remaining two terms of \eqref{before integration by part} we have
\begin{equation}\begin{aligned} \label{integration by parts}
    &\int_{B_R} \Delta^{2k-1} u \Delta \psi_n \nabla u \cdot x\, dx=-\int_{B_R} \nabla  \Delta^{2k} u \cdot \nabla (\Delta \psi_n \nabla u \cdot x)\, dx\\ & = (-1)^2\int_{B_R} \Delta^{2k} u \Delta( \Delta \psi_n \nabla u \cdot x)\, dx= \ldots=(-1)^{2(k-1)} \int_{B_R} \Delta^k u \Delta^{k-1}(\Delta \psi_n x \cdot \nabla u)dx.\\ &\int_{B_R} \Delta^{2k-1} u \nabla \psi_n \cdot \nabla(\nabla u \cdot x)\, dx= -\int_{B_R} \nabla \Delta^{2k} u \cdot \nabla (\nabla \psi_n \cdot \nabla(\nabla u \cdot x))\, dx \\ & =(-1)^2\int_{B_R} \Delta^{2k} u \Delta(\nabla \psi_n \cdot \nabla(\nabla u \cdot x))\, dx=\ldots= (-1)^{2(k-1)}\int_{B_R} \Delta^{k} u \Delta^{k-1} (\nabla \psi_n \cdot \nabla(\nabla u \cdot x))\, dx. 
\end{aligned}
\end{equation}
\\ \noindent Extending the integrals over the whole space, we have \begin{equation} \label{pohozaev_psi_n_pari}
\begin{aligned}
     0=&\int_{\R^N}\frac{ N-4k}{2} (\Delta^k u)^2 \psi_n\, dx  + \int_{ \R^N} \bigg( \frac{1}{2} (\Delta^k u)^2 x +   
  \sum_{j=0}^{k-2} \Delta^{j+1} (\nabla u \cdot x) \nabla \Delta^{2k-j-2} u\\ &-\sum_{j=0}^{k-1} \Delta^{2k-j-1} u \nabla \Delta^j (\nabla u \cdot x) \bigg) \cdot \nabla \psi_n\, dx -\int_{\R^N} \Delta^{k} u \Delta^{k-1} (\nabla \psi_n \cdot \nabla(\nabla u \cdot x))\, dx \\ &  - \int_{\R^N} \Delta^k u \Delta^{k-1}(\Delta \psi_n x \cdot \nabla u)dx -\int_{\R^N} N\psi_n G(u)\, dx  -\int_{\R^N} G(u) \nabla \psi_n \cdot x\, dx.
\end{aligned}
   \end{equation}
   To better understand what will be done we need to recall these approximations \begin{equation}\label{k-laplacian scalar}
   \Delta^{k-1}(\Delta \psi_n x \cdot \nabla u) \approx \Delta^{k-1}\Delta\psi_n x \cdot \nabla u+ B_{k-1}(\psi_n, u)+ \Delta \psi_n \Delta^{k-1}(x\cdot\nabla u)
\end{equation}

 \begin{equation}\label{vector identy}\begin{aligned}
     \nabla \psi_n  \cdot \nabla (\nabla u \cdot x)&= \nabla \psi_n \cdot \nabla\Big( \sum_{i=1}^N \frac{\partial}{\partial x_i}u x_i\Big)=\Big(\sum_{i=1}^N \frac{\partial}{\partial x_1}\Big(\frac{\partial}{\partial x_i}u x_i\Big), \ldots, \sum_{i=1}^N \frac{\partial}{\partial x_1}\Big(\frac{\partial}{\partial x_i}u x_i\Big)       \Big)\\ &=\sum_{j=1}^N\sum_{i=1}^N \frac{\partial^2}{\partial x_j\partial x_i}u x_i \frac{\partial}{\partial x_j}\psi_n + \frac{\partial}{\partial x_j}u\frac{\partial }{\partial x_j}\psi_n=\sum_{i,j=1}^N\frac{\partial^2}{\partial x_j\partial x_i}u x_i \frac{\partial}{\partial x_j}\psi_n + \nabla u \cdot \nabla \psi_n. \end{aligned}
\end{equation}
\begin{equation}\label{lps}
    \begin{aligned}
        \Delta^{k-1} (\frac{\partial^2}{\partial x_j\partial x_i}u x_i \frac{\partial}{\partial x_j}\psi_n) & \approx \Delta^{k-1}(\frac{\partial^2}{\partial x_j\partial x_i}u) x_i \frac{\partial}{\partial x_j}\psi_n+ C_{k-1}(u, \psi_n)+ \Delta^{k-1}(x_i \frac{\partial}{\partial x_j}\psi_n)\frac{\partial^2}{\partial x_j\partial x_i}u\\ & \approx \Delta^{k-1}(\frac{\partial^2}{\partial x_j\partial x_i}u) x_i \frac{\partial}{\partial x_j}\psi_n+ C'_{k-1}(u, \psi_n)+ \psi_n^{(2k-1)}\Big(\frac{2 x_i}{n^2}\Big)^{2k-2}\frac{2x_j}{n^2}\frac{\partial^2}{\partial x_j\partial x_i}u.
    \end{aligned}
\end{equation}
In \eqref{k-laplacian scalar} $B_{k-1}$ encompasses all the products of the mixed derivatives between $\psi_n$ and $u$  which are certainly of a lower degree than $\Delta^{k-1} \psi_n$ and  $\Delta^{k-1} u$ respectively, same consideration for $C$ and $C'$ in \eqref{lps}.

By \eqref{i-laplacian}, \eqref{k-laplacian scalar},  \eqref{vector identy} and \eqref{lps},   the first and the second terms becomes of \eqref{integration by parts} become respectively 
\begin{equation}\label{UNO}\begin{aligned}
     \int_{\R^N} \Delta^k u \Delta^{k-1}(\Delta \psi_n x \cdot \nabla u)\, dx&= \int_{\R^N} \Delta^k u \Delta^{k-1}\Delta\psi_n x \cdot \nabla u  + \int_{\R^N} \Delta^k u B_{k-1}(\psi_n, u)\, dx \\ & +\int_{\R^N} \Delta^k u  \Delta  \psi_n( 2(k-1) \Delta^{k-1} u + \nabla \Delta^{k-1} u \cdot x)\, dx.
     \end{aligned}
     \end{equation}
 \begin{equation}\label{DUE}\begin{aligned}
          &\int_{\R^N} \Delta^{k} u \Delta^{k-1} (\nabla \psi_n \cdot \nabla(\nabla u \cdot x))\, dx\\& = \int_{\R^N} \Delta^k u \Delta^{k-1}\Big(  \sum_{i,j=1}^N\frac{\partial^2}{\partial x_j\partial x_i}u x_i \frac{\partial}{\partial x_j}\psi_n + \frac{\partial u}{\partial x_j}\frac{\partial \psi_n}{\partial x_j}\Big)\, dx\\ & =\sum_{i,j=1}^N\int_{\R^N} \Delta^ku \Big(\Delta^{k-1}(\frac{\partial^2}{\partial x_j\partial x_i}u) x_i \frac{\partial}{\partial x_j}\psi_n+ C'_{k-1}(u, \psi_n)+ \psi_n^{(2k-1)}\Big(\frac{2 x_i}{n^2}\Big)^{2k-2}\frac{2x_j}{n^2}\frac{\partial^2}{\partial x_j\partial x_i}u\Big)\, dx\\ &+  \int_{\R^N} \Delta^k u \Delta^{k-1}\frac{\partial u}{\partial x_j}\frac{\partial \psi_n}{\partial x_j} + \Delta^k u B_{k-1}(u,\psi_n) + \Delta^k u \frac{\partial u}{\partial x_j}\Delta^{k-1}\frac{\partial \psi_n}{\partial x_j} \, dx.
         \end{aligned}
       \end{equation}
       Therefore  showing that \eqref{integration by parts} goes to zero is equivalent to showing that both \eqref{UNO} and \eqref{DUE} go to zero. \\
       \indent 
        To facilitate the reader's understanding 
and to lighten the proof, we show the calculations  in some pieces in  \eqref{UNO} and \eqref{DUE}, since the same reasoning can be applied to the remaining terms. \\ Now we show how \eqref{UNO} goes to 0. Before we remark that  
           \begin{equation*}
    \begin{aligned}
&\int_{\R^N\smallsetminus B_n}  |\Delta^{2k-1} u \Delta \psi_n \nabla u \cdot x|\, dx=\int_{\R^N\smallsetminus B_n}|\Delta^k u||\Delta^{k-1}\Delta\psi_n x \cdot \nabla u+ B_{k-1}(\psi_n, u)+ \Delta \psi_n \Delta^{k-1}(x\cdot\nabla u)|\, dx \\ &\leq  \int_{\R^N\smallsetminus B_n}|\Delta^k u||(\Delta^{k-1}\Delta\psi_n x \cdot \nabla u+ B_{k-1}(\psi_n, u))|\, dx\\ &+\int_{\R^N\smallsetminus B_n} |\Delta^k u  \Delta  \psi_n( 2(k-1) \Delta^{k-1} u + \nabla \Delta^{k-1} u \cdot x)|\, dx \\ &\leq  \int_{\R^N\smallsetminus B_n}|\Delta^k u||\Delta^{k-1}\Delta\psi_n x \cdot \nabla u|\, dx+  \int_{\R^N\smallsetminus B_n}|\Delta^k u| B_{k-1}(\psi_n, u)|\, dx+ \int_{\R^N\smallsetminus B_n} c|\Delta^k u|  |\Delta  \psi_n||\Delta^{k-1} u|\, dx  \\ &+ \int_{\R^N\smallsetminus B_n}|\Delta^k u|  |\Delta  \psi_n| |\nabla \Delta^{k-1} u|  |x|\, dx.
\end{aligned}
\end{equation*}
Let's now show how the last term goes to zero. By H\"older inequalities, properties of $\psi_n$ and the embeddings, we have 
\begin{equation}
    \begin{aligned}
          &\int_{\R^N\smallsetminus B_n}|\Delta^k u|  |\Delta  \psi_n| |\nabla \Delta^{k-1} u|  |x|\, dx \\ &\leq \bigg( \int_{\R^N\smallsetminus B_n} |\Delta^k u|^2\, dx \bigg)^{\frac{1}{2}}\bigg(\int_{\R^N \smallsetminus B_n} (|\Delta \psi_n||x|)^{N}\, dx \bigg)^{\frac{1}{N}} \bigg(\int_{\R^N \smallsetminus B_n} |\nabla \Delta^{k-1} u|^{\frac{2N}{N-2}}\, dx \bigg)^{\frac{N-2}{2N}}\\ &
        \leq \bigg( \int_{\R^N\smallsetminus B_n} |\Delta^k u|^2\, dx \bigg)^{\frac{1}{2}} C \to 0, \,\,\,\,\,\,\, \textit{for} \,\,\,\,\,\, n\to +\infty;
    \end{aligned}
\end{equation}
Then, applying the same considerations for the remaining terms of \eqref{UNO} goes to $0$. \\ 

Now we show how \eqref{DUE} goes to zero.  Before we remark that
\begin{equation*}
    \begin{aligned}
&\int_{\R^N\smallsetminus B_n}| \Delta^{k} u \Delta^{k-1} (\nabla \psi_n \cdot \nabla(\nabla u \cdot x))|\, dx\\ &\leq  \sum_{i,j=1}^N \int_{\R^N\smallsetminus B_n} |\Delta^{k} u| \Big(|\Delta^{k-1}(\frac{\partial^2}{\partial x_j\partial x_i}u)| |x_i| | \frac{\partial}{\partial x_j}\psi_n| + |C'_{k-1}(u, \psi_n)|\\ &+ |\psi_n^{(2k-1)}\Big(\frac{2 x_i}{n^2}\Big)^{2k-2}\frac{2x_j}{n^2}\frac{\partial^2}{\partial x_j\partial x_i}u| \Big)\, dx+ \int_{\R^N\smallsetminus B_n}  |\Delta^k u| |\Delta^{k-1}\frac{\partial u}{\partial x_j}\frac{\partial \psi_n}{\partial x_j}| + |\Delta^k u | |B_{k-1}(u,\psi_n)| \\ & + |\Delta^k u| |\frac{\partial u}{\partial x_j}|\Delta^{k-1}\frac{\partial \psi_n}{\partial x_j}| \, dx.
   \end{aligned}
\end{equation*}
We  focus on the terms $ \sum_{i,j=1}^N \int_{\R^N \smallsetminus B_n} |\Delta^k u| |\psi_n^{(2k-1)}\Big(\frac{2 x_i}{n^2}\Big)^{2k-2}\frac{2x_j}{n^2}\frac{\partial^2}{\partial x_j\partial x_i}u|\, dx$.
By H\"older inequalities and for the properties of $\psi_n$ and for the Sobolev embeddings we have for these terms 
\begin{equation*}
    \begin{aligned}
        &  \sum_{i,j=1}^N \int_{\R^N \smallsetminus B_n} |\Delta^k u| |\psi_n^{(2k-1)}\Big(\frac{2 x_i}{n^2}\Big)^{2k-2}\frac{2x_j}{n^2}\frac{\partial^2}{\partial x_j\partial x_i}u|\, dx\\ & = \sum_{i,j=1}^N \int_{n \leq |x| \leq  \sqrt{2}n} \Delta^{k} u \psi_n^{(2k-1)}(\frac{|x|}{n^2}) \Big( \frac{2x_i}{n^2}\Big)^{2k-2}\frac{2x_j}{n^2}\partial^2_{x_jx_i} u  \, dx (\psi_n,u,x) \\& \leq \sum_{i,j=1}^N \int_{n \leq |x| \leq  \sqrt{2}n} \Delta^{k-1} u \psi_n^{(2k-1)}(\frac{|x|}{n^2})  \frac{|x|^{4k}}{n^{4k}}\partial^2_{x_jx_i} u  \, dx + C_{i,j}(\psi_n,u, x)\\
&\leq \sum_{i,j=1}^N 2^{2k} \int_{n \leq |x| \leq  \sqrt{2}n} |\psi_n^{(2k-2)}||\Delta^{k} u||\partial^2_{x_jx_i} u | \, dx \\&  \leq \sum_{i,j=1}^N 2^{2k} \Big(\int_{\R^N\smallsetminus B_n} |\psi_n^{(2k-2)}|^{\frac{N}{m-2}}\, dx\Big)^{\frac{m-2}{N}}\Big( \int_{\R^N\smallsetminus B_n} |\Delta^{k} u|^2\, dx\Big)^{\frac{1}{2}}\Big(\int_{\R^N\smallsetminus B_n}| \partial^2_{x_jx_i} u|^{\frac{2N}{N-2(m-2)}}\, dx \Big)^{\frac{N-2(m-2)}{2N}}\\ &  \leq \sum_{i,j=1}^N 2^{2k} \Big( \int_{\R^N\smallsetminus B_n} |\Delta^{k} u|^2\, dx\Big)^{\frac{1}{2}}\Big(\int_{\R^N\smallsetminus B_n}| \partial^2_{x_jx_i}u|^{\frac{2N}{N-2(m-2)}}\, dx \Big)^{\frac{N-2(m-2)}{2N}} \to 0\,\,\,\,\,\,\,\,\,\,\, \textit{for}\,\,\,\,\,\,\,\,\, n\to +\infty \end{aligned}
\end{equation*}
As for the \eqref{UNO}, applying the same calculations for the remaining pieces of the \eqref{DUE} we have that this goes to zero. \\
\indent 
In order to estimate  $\sum_{j=0}^{k-2} \int_{\R^N} \Delta^{j+1} (\nabla u \cdot x) \nabla \Delta^{2k-j-2} u \cdot \nabla \psi_n\, dx$ and  $\sum_{j=0}^{k-1} \int_{\R^N} ( \Delta^{2k-j-1} u \nabla \Delta^j (\nabla u \cdot x) ) \cdot \nabla \psi_n\, dx$, we can follow the same steps as before. 
 \\ \noindent Summing up for the even case we have by the Lebesgue dominated convergence theorem and for the properties of $\psi_n$, $G$ and $u$\begin{equation*}
    \begin{aligned}
        & \int_{ \R^N} \bigg( \frac{1}{2} (\Delta^k u)^2 x +   
  \sum_{j=0}^{k-2} \Delta^{j+1} (\nabla u \cdot x) \nabla \Delta^{2k-j-2} u  - \sum_{j=0}^{k-1} \Delta^{2k-j-1} u \nabla \Delta^j (\nabla u \cdot x) \bigg) \cdot \nabla \psi_n\, dx\to 0 \,\,\, \textit{for} \,\,\, n\to +\infty\\ & -\int_{\R^N} \Delta^{2k-1} u \nabla \psi_n \cdot \nabla(\nabla u \cdot x)\, dx  - \int_{\R^N} \Delta^{2k-1} u \Delta \psi_n \nabla u \cdot x\, dx\to 0 \,\,\, \textit{for} \,\,\, n\to +\infty \\ & -\int_{\R^N} G(u) \nabla \psi_n \cdot x\, dx \to 0 \,\,\, \textit{for} \,\,\, n\to +\infty \\ & \int_{\R^N}\frac{ N-4k}{2} (\Delta^k u)^2 \psi_n\, dx   -\int_{\R^N} N\psi_n G(u)\, dx \,\,\, \to  \int_{\R^N}\frac{ N-4k}{2} (\Delta^k u)^2\, dx -N\int_{\R^N}  G\, dx \,\,\, \textit{for}\,\,\, n\to +\infty.
    \end{aligned}
\end{equation*}    
 Finally by \eqref{pohozaev_psi_n_pari}, we have \begin{equation}\label{pohozaev_pari}
     \int_{\R^N}\frac{ N-4k}{2} (\Delta^k u)^2\, dx -N\int_{\R^N}  G(u)\, dx=0.
 \end{equation} 
 The same considerations apply  for the all odd case.\\ 
\noindent Finally we have 
\begin{equation*}
0 = \int_{\mathbb{R}^N} \left( -( -\Delta)^m u + g(u) \right) \psi_n \nabla u \cdot x \, dx
\end{equation*}
\begin{equation*}
= \int_{\mathbb{R}^N} ( \frac{1}{2} |\nabla^m u|^2 \nabla \psi_n \cdot x + X \cdot \nabla \psi_n + \frac{N - 2m}{2} \psi_n |\nabla^m u|^2 - N \psi_n G(u)- G(u) \nabla \psi_n \cdot x ) \, dx
\end{equation*}
\begin{equation*}
X := 
\begin{cases}
-\Delta^k (\nabla u \cdot x) \nabla \Delta^k u + \sum_{j=0}^{k-1} \Delta^{2k-j} u \nabla \Delta^j (\nabla u \cdot x) - \sum_{j=0}^{k-1} \Delta^j (\nabla u \cdot x) \nabla \Delta^{2k-j} u, \\
\nabla u \cdot x \nabla \Delta^{2k-1} u + \sum_{j=0}^{k-2} \Delta^{j+1} (\nabla u \cdot x) \nabla \Delta^{2k-j-2} u - \sum_{j=0}^{k-1} \Delta^{2k-j-1} u \nabla \Delta^j (\nabla u \cdot x),
\end{cases}
\end{equation*}
if $m = 2k + 1$ or $m = 2k$ respectively. \\ \noindent   
Finally, from the properties of $\psi_n$ and the dominated convergence theorem, we
conclude the proof by letting $n \to + \infty$.

\end{proof}

\section{Lions lemma}\label{sec:Lions}
We prove the following result, which implies the variant of Lions's lemma in $\D)$, cf. \cite{Lions1,Lions2, Med-Siem}. In fact, we extend the arguments from \cite[Lemma 4.1]{Med-Siem} from the case 
$m=2$ to the general case, and for the reader's convenience, we provide a detailed proof.

\begin{lemma}\label{lem:Conv}
	Suppose that   $(u_n)\subset \D$ is bounded. Then $u_n(\cdot+y_n)\weakto 0$ in $\D$ for any $(y_n)\subset \Z^N$ if and only if
	\begin{equation*}
		\int_{\R^N} \Psi(u_n)\, dx\to 0\quad\hbox{as } n\to\infty
	\end{equation*}
	for any continuous $\Psi:\R\to \R$ satisfying \eqref{eq:Psi}.
\end{lemma}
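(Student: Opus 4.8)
The statement is an equivalence, so the plan is to prove the two implications separately; throughout I would fix $(y_n)\subset\Z^N$, write $w_n:=u_n(\cdot+y_n)$, and use that these are bounded in $\D$, so that — since $\D\hookrightarrow W^{m,2}_{\mathrm{loc}}(\R^N)$ with the local embedding into $L^{r}_{\mathrm{loc}}(\R^N)$ compact for every $1\le r<2^{*}$ — every subsequence of $(w_n)$ has a further subsequence converging weakly in $\D$, strongly in $L^{r}_{\mathrm{loc}}(\R^N)$, and a.e. For the ``if'' direction: assuming $\int_{\R^N}\Psi(u_n)\,dx\to0$ for every continuous $\Psi$ obeying \eqref{eq:Psi}, I would take any weakly convergent subsequence $w_n\weakto w$, pass to a further subsequence with $w_n\to w$ a.e., and test with one fixed nonnegative admissible $\Psi$ that is strictly positive off the origin (e.g.\ $\Psi(s)=\min\{|s|^{a},|s|^{b}\}$ with $0<b<2^{*}<a$, which lies in the admissible class and satisfies $0\le\Psi(w_n)\le2|w_n|^{2^{*}}\in L^{1}$); translation invariance of Lebesgue measure gives $\int\Psi(w_n)\,dx=\int\Psi(u_n)\,dx\to0$, so Fatou forces $\int\Psi(w)\,dx=0$, hence $w=0$, and since every weakly convergent subsequence of the bounded $(w_n)$ has limit $0$, $w_n\weakto0$ in $\D$.

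For the ``only if'' direction, assume $u_n(\cdot+y_n)\weakto0$ in $\D$ for all $(y_n)\subset\Z^N$. The first step is to upgrade this to $u_n(\cdot+y_n)\to0$ in $L^{r}_{\mathrm{loc}}(\R^N)$ for all such $(y_n)$ and all $r<2^{*}$ (the weak $\D$-limit must coincide with the strong local limit), which is equivalent to the vanishing property $\sup_{y\in\Z^N}\int_{y+Q}|u_n|^{r}\,dx\to0$ with $Q:=[0,1)^{N}$. Next, fixing an admissible $\Psi$ and $\varepsilon>0$, choose $0<\delta<M$ with $|\Psi(s)|\le\varepsilon|s|^{2^{*}}$ for $|s|\le\delta$ or $|s|\ge M$, and $C_{\varepsilon}:=\max_{\delta\le|s|\le M}|\Psi(s)|$; then
\[
\int_{\R^N}|\Psi(u_n)|\,dx\le\varepsilon\int_{\R^N}|u_n|^{2^{*}}\,dx+C_{\varepsilon}\bigl|\{x:\delta\le|u_n(x)|\le M\}\bigr|,
\]
and, $(u_n)$ being bounded in $L^{2^{*}}(\R^N)$, everything reduces to the claim $(\star)$: $\bigl|\{\delta\le|u_n|\le M\}\bigr|\to0$.

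To prove $(\star)$ — which I expect to be the main obstacle — the idea is to truncate and descend to a first-order Sobolev space where the classical Lions lemma applies. Let $T\colon[0,\infty)\to[0,\infty)$ be Lipschitz with $T\equiv0$ on $[0,\delta/2]$, $T(t)=t-\delta/2$ on $[\delta/2,2M]$, $T\equiv2M-\delta/2$ on $[2M,\infty)$, and set $w_n:=T(|u_n|)$, so $\{\delta\le|u_n|\le M\}\subseteq\{w_n\ge\delta/2\}$. By \eqref{eq:0.1}--\eqref{eq:0.2} (with $l=m-1$), $\nabla u_n$ is bounded in $L^{p}(\R^N)$ for $p:=\tfrac{2N}{N-2m+2}\in(2,2^{*})$, and $N>2m$ gives $p<p^{*}:=\tfrac{Np}{N-p}$; since $|\nabla w_n|\le|\nabla u_n|$ a.e.\ and $w_n$ is bounded and supported in $\{|u_n|>\delta/2\}$, a set of measure $\le(\delta/2)^{-2^{*}}\|u_n\|_{L^{2^{*}}}^{2^{*}}$ bounded uniformly in $n$, the sequence $(w_n)$ is bounded in $W^{1,p}(\R^N)$. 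Moreover $(w_n)$ vanishes: for $(y_n)\subset\Z^N$, $u_n(\cdot+y_n)\to0$ a.e.\ along subsequences, so by continuity of $T$ and dominated convergence $w_n(\cdot+y_n)\to0$ in $L^{p}(Q)$, i.e.\ $\sup_{y\in\Z^N}\int_{y+Q}|w_n|^{p}\,dx\to0$. Then the classical Lions lemma in $W^{1,p}(\R^N)$ (cf.\ \cite{Lions1,Lions2}) gives $w_n\to0$ in $L^{\ell}(\R^N)$ for any $p<\ell<p^{*}$, hence $\bigl|\{w_n\ge\delta/2\}\bigr|\le(2/\delta)^{\ell}\|w_n\|_{L^{\ell}}^{\ell}\to0$, which is $(\star)$; letting $\varepsilon\to0$ completes the proof. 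The reason the straightforward Lions-type argument does not work directly in $\D$ is the absence of any global $L^{q}$ control with $q<2^{*}$, so that the usual summation over cubes of an interpolated intermediate norm leaves an uncontrolled remainder — replacing $|u_n|$ by the truncation $T(|u_n|)$, which genuinely lies in $W^{1,p}(\R^N)$, is what repairs this.
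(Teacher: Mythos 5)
Your proof is correct, and for the main implication (translated weak vanishing $\Rightarrow\int_{\R^N}\Psi(u_n)\,dx\to 0$) it takes a genuinely different route from the paper. The paper modifies $u_n$ below the level $\delta$ by the power $\delta^{1-2^*/2_*}|u_n|^{2^*/2_*}$ (keeping $|u_n|$ above $\delta$), with $2_*=\frac{2N}{N-2(m-1)}$, shows this modification is bounded in $W^{1,2_*}(\R^N)$, and then runs the cube-decomposition/Sobolev-interpolation argument directly on $\Psi(u_n)$, choosing a translation sequence that nearly attains the supremum over cubes — in effect re-proving a Lions-type estimate tailored to a general $\Psi$ and yielding the quantitative bound \eqref{psi_lions_inequa}. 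You instead reduce a general $\Psi$ to the single claim $|\{\delta\le|u_n|\le M\}|\to 0$ (only an $L^\infty$ bound on $\Psi$ over the intermediate range is needed), and prove that claim via the plateau truncation $T(|u_n|)$, which is bounded in $W^{1,p}(\R^N)$ with $p=2_*$ (indeed $p^*=2^*$), inherits the cube-vanishing from $u_n$ since $T(t)\le t$, and hence tends to $0$ in $L^\ell(\R^N)$, $p<\ell<p^*$, by the classical first-order Lions lemma \cite{Lions1,Lions2} used as a black box; Chebyshev then gives the claim. Your route is shorter and modular, outsourcing the concentration estimate to the classical lemma; the paper's is self-contained in $\D$ and handles $\Psi$ directly. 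For the converse implication both arguments test with a min-of-two-powers function satisfying \eqref{eq:Psi}; yours concludes cleanly via Fatou and a.e.\ convergence of subsequences (using the compact local embedding so the weak and a.e.\ limits agree), the paper's by contradiction with an $\eps$-parametrized family — essentially the same idea. The steps you leave implicit (the chain-rule bound $|\nabla T(|u_n|)|\le|\nabla u_n|$, the equivalence of cube- and ball-vanishing, and the deduction of $\sup_{y\in\Z^N}\int_{y+Q}|u_n|^r\,dx\to0$ for $r<2^*$ from the hypothesis via Rellich and a selection of near-maximizing $y_n$) are standard and fill in without difficulty.
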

\begin{proof}
	Let $(u_n)$ be a sequence in $\D$ be such that $u_n(\cdot+y_n)\weakto 0$ in $\D$ for every $(y_n)\subset \Z^N$.
	Take any $\eps>0$ and $2_\ast<p<2^*$, $2_*:=\frac{2N}{N-2(m-1)}$ and suppose that $\Psi$ satisfies \eqref{eq:Psi}. Fix $\eps$, for a definition of limit we find $0<\delta<M$ and $c(\eps)>0$ such that 
	\begin{eqnarray*}
		\Psi(s)&\leq& \eps |s|^{2^*}\quad\hbox{ for }|s|\leq \delta,\\
		\Psi(s)&\leq& \eps |s|^{2^*}\quad\hbox{ for }|s|>M,\\
		\Psi(s)&\leq& c(\eps) |s|^{p}\quad\hbox{ for }|s|\in (\delta,M].
	\end{eqnarray*}
	Let us define $(w_n)$ by
	\[
	w_n(x):=
	\begin{cases}|u_n(x)|^{2^*/2_\ast}\delta ^{1-2^*/2_\ast} &\text{for }|u_n(x)|\leq \delta.,\\
 |u_n(x)| &\text{for }|u_n(x)|>\delta.
	\end{cases}
	\] 
	We want to prove that $(w_n)$ is bounded in $W^{1,2_\ast}(\R^N)$.
	First of all, we have
	\begin{equation}\label{eq:1.30}
		\begin{aligned}
			\int_{\R^N} |w_n(x)|^{2_\ast}\d x &= \int_{\{|u_n|\leq \delta\}}\delta ^{2_\ast - 2^*} |u_n|^{2^*}\d x + \int_{\{|u_n|\geq \delta\}}|u_n|^{2_\ast}\d x\\
			&= \delta ^{2_\ast - 2^*}\int_{\{|u_n|\leq \delta\}}|u_n|^{2^*}\d x + \int_{\{|u_n|> \delta\}}\frac{|u_n|^{2^*}}{|u_n|^{2^*-2_\ast}}\d x\\
			&\leq \delta ^{2_\ast - 2^*}\int_{\{|u_n|\leq \delta\}}|u_n|^{2^*}\d x + \int_{\{|u_n|> \delta\}}\frac{|u_n|^{2^*}}{\delta^{2^*-2_\ast}}\d x \\
			&= \delta^{2^\ast-2^*} \int_{\R^N}|u_n|^{2^*}\d x.
		\end{aligned}	
	\end{equation}
	By the absolute continuous characterization (see \S 1.1.3 in \cite{Mazja}), we infer that each $u_n$ is absolutely continuous on almost every line parallel to the $0x_i$-axis, for $i=1,\ldots, N$. Thus the same holds for each $w_n$, since $w_n = F(u_n)$, where $F(t) = \min\{\delta^{1-2^*/2_\ast}|t|^{2^*/2_\ast},|t|\}$ is a globally Lipschitz function.
	Moreover, for every $i=1,\ldots, N$, we have
	\[
	\frac{\partial w_n}{\partial x_i} = 
	\begin{cases}
		\frac{2^*}{2_\ast}\delta^{1-2^*/2_\ast}\mathrm{sign}(u_n)|u_n|^{2^*/2_\ast-1}\frac{\partial u_n}{\partial x_i}	, &\text{for } |u_n(x)|\leq \delta,\\
		\mathrm{sign}(u_n) \frac{\partial u_n}{\partial x_i},&\text{for }|u_n(x)|> \delta.	
	\end{cases}
	\]
	Thus
	\begin{equation}\label{eq:1.31}
		\begin{aligned}
			\int_{\R^N}\left | \frac{\partial w_n}{\partial x_i}\right|^{2_\ast}\d x&= \int_{\{|u_n|\leq \delta\}} \left| \frac{2^*}{2_\ast}\delta^{1-2^*/2_\ast}\mathrm{sign}(u_n)|u_n|^{2^*/2_\ast-1}\frac{\partial u_n}{\partial x_i} \right|^{2_\ast}\, dx + \int_{\{|u_n|>\delta\}}\left|\frac{\partial u_n}{\partial x_i} \right |^{2_\ast}\d x\\ &= \le \frac{2^*}{2_\ast}\pr ^{2_\ast}\delta^{2_\ast-2^*}\int_{\{|u_n|\leq \delta\}} |u_n|^{2^*- 2_\ast}\left |\frac{\partial u_n}{\partial x_i}\right|^{2_\ast} \d x + \int_{\{|u_n|>\delta\}}\left|\frac{\partial u_n}{\partial x_i} \right |^{2_\ast}\d x\\ & \leq \le \frac{2^*}{2_\ast}\pr ^{2_\ast} \delta^{2_\ast-2^*}\int_{\{|u_n|\leq \delta\}} \delta^{2^*-2_\ast} \left | \frac{\partial u_n}{\partial x_i}\right|^{2_\ast} \, dx + \int_{\{|u_n|>\delta\}}\left|\frac{\partial u_n}{\partial x_i} \right |^{2_\ast}\d x\\ &= \le \frac{2^*}{2_\ast}\pr ^{2_\ast}\int_{\{|u_n|\leq \delta\}} \left |\frac{\partial u_n}{\partial x_i}\right|^{2_\ast} \d x + \int_{\{|u_n|>\delta\}}\left|\frac{\partial u_n}{\partial x_i} \right |^{2_\ast}\d x\\
			&\leq \le \frac{2^*}{2_\ast}\pr ^{2_\ast} \int_{\R^N}\left| \frac{\partial u_n}{\partial x_i}\right|^{2_\ast} \d x.
		\end{aligned}	
	\end{equation}
	By \eqref{eq:1.30}, \eqref{eq:1.31} (again using an absolute continuous characterization on lines from \S 1.1.3 \cite{Mazja}) and the fact that $(u_n)$ is bounded in $\D$, we conclude that $(w_n)$ is bounded in $W^{1,2_\ast}(\R^N)$.
	
	Let  $\Om=(0,1)^N$ and $y\in  \R^N$ be arbitrary.
	Then, by the Sobolev inequality one has 
	\begin{eqnarray*}
		\int_{\Om+y}\Psi(u_n)\d x&=&
		\int_{(\Om+y)\cap\{\delta<|u_n|\leq M\}}\Psi(u_n)\d x
		+\int_{(\Om+y)\cap (\{|u_n|> M\}\cup \{|u_n|\leq \delta\})}\Psi(u_n)\d x\\
		&\leq &
		c(\eps)\int_{(\Om+y)\cap\{\delta<|u_n|\leq M\}}|w_n|^p\d x
		+\eps \int_{(\Om+y)\cap (\{|u_n|> M\}\cup \{|u_n|\leq \delta\})}|u_n|^{2^*}\d x\\
		&\leq &c(\eps) C\Big(\int_{\Om+y}|w_n|^{2_\ast} + |\nabla w_n|^{2_\ast}\d x \Big)\Big(\int_{\Om+y}|w_n|^{p}\d x\Big)^{1-2_\ast/p}+\eps \int_{\Om+y}|u_n|^{2^*}\d x,
	\end{eqnarray*}
	where $C>0$ is a constant from the Sobolev inequality.
	Then we sum the inequalities over $y\in\Z^N$ and get  
	\begin{equation} \label{psi_lions_inequa}
		\int_{\R^N}\Psi(u_n)\d x \leq c(\eps) C\left(\int_{\R^N}|w_n|^{2_\ast} + |\nabla w_n|^{2_\ast}\d x \right)\left(\sup_{y\in\Z^N}\int_{\Om}|w_n(\cdot +y)|^{p}\d x\right)^{1-2_\ast/p}+\eps \int_{\R^N}|u_n|^{2^*}\d x.
	\end{equation} Let us take $(y_n)\subset\Z^N$ such that
	\begin{equation}\label{supy_nlions}  \sup_{y\in\Z^N}\int_{\Om}|w_n(\cdot+y)|^{p}\d x\leq 2\int_{\Om}|w_n(\cdot+y_n)|^{p}\d x 
	\end{equation}
	for any $n\geq 1$.
	We know by assumption that 
	$u_n(\cdot+y_n)\weakto 0$ in $\D$ and therefore exist  a subsequence  that $u_n(\cdot+y_n)\to 0$ in $L^p(\Om)$.
	
	\noindent Since $|w_n(x)|\leq |u_n(x)|$,  we infer that $w_n(\cdot+y_n)\to 0$ in $L^p(\Om)$. 
	Therefore  we calculate the limsup of \eqref{psi_lions_inequa} using the \eqref{supy_nlions} and the limits property we have
\begin{equation*}
    \begin{aligned}
        &\limsup\limits_{n\to +\infty} \int_{\R^N}\Psi(u_n)\d x\\ 	& \leq \limsup\limits_{n\to +\infty} c(\eps) C\left(\int_{\R^N}|w_n|^{2_\ast} + |\nabla w_n|^{2_\ast}\d x \right)\left(2\int_{\Om}|w_n(\cdot+y_n)|^{p}\d x\right)^{1-2_\ast/p} +\limsup\limits_{n\to +\infty}\eps \int_{\R^N}|u_n|^{2^*}\d x\\ & \leq \eps \limsup_{n\to\infty}\int_{\R^N}|u_n|^{2^*}\d x,
    \end{aligned}
\end{equation*}	
	and since $\eps>0$ is arbitrary, the assertion follows.\\
\indent 	
	To prove the second implication, we proceed by contradiction, suppose that  $u_n(\cdot+y_n)$ does not converge to $0$  in $\D$, for some $(y_n)$  in $\Z^N$, and $\Psi(u_n)\to 0$ in $L^1(\R^N)$. We may assume that $u_n(\cdot+y_n)\to u_0\neq 0$ in $L^p(\Om)$ for some bounded domain $\Om\subset\R^N$ and $1<p<2^*.$ Take any $\eps>0$, $q>2^*$ and let us define $\Psi(s):=\min\{|s|^p,\eps^{p-q}|s|^q\}$ for $s\in\R$. Then
	\begin{eqnarray*}
		\int_{\R^N} \Psi(u_n)\d x&\geq& \int_{\Om + y_n\cap \{|u_n|\geq \eps\}}|u_n|^p\d x+\int_{\Om+ y_n\cap \{|u_n|\leq \eps\}} \eps^{q-p}|u_n|^q\d x\\
		&=& \int_{\Om+y_n} |u_n|^p\d x+\int_{\Om+y_n\cap \{|u_n|\leq \eps\}} \eps^{p-q}|u_n|^q-|u_n|^p\d x\\
		&\geq& \int_{\Om+y_n} |u_n|^p\d x - \e^p|\Omega|.\\
	\end{eqnarray*}
	Thus we get $u_n(\cdot + y_n) \to 0$ in $L^p(\Om)$ and this contradicts $u_0\neq 0$.
\end{proof}
\begin{lemma}\label{lem:Lions}
	Suppose that $(u_n)$ is bounded in $\cD^{m,2}(\R^N)$ and for some $r>0$ 	
	\begin{equation}\label{eq:LionsCond11}
		\lim_{n\to\infty}\sup_{y\in \R^N} \int_{B(y,r)} |u_n|^2\,dx=0.
	\end{equation}
	Then  
	$$\int_{\R^N} \Psi(u_n)\, dx\to 0\quad\hbox{as } n\to\infty$$
	for every continuous $\Psi:\R\to \R$ satisfying
	\begin{equation}
		\label{eq:Psi}
		\lim_{s\to 0} \frac{\Psi(s)}{|s|^{2^{*}}}=\lim_{|s|\to\infty} \frac{\Psi(s)}{|s|^{2^{*}}}=0.
	\end{equation}
\end{lemma}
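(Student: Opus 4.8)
The plan is to derive Lemma \ref{lem:Lions} from Lemma \ref{lem:Conv}. By the latter, it suffices to show that the nonconcentration hypothesis \eqref{eq:LionsCond11} forces $u_n(\cdot+y_n)\weakto 0$ in $\D$ for every sequence $(y_n)\subset\Z^N$; once this is established, $\int_{\R^N}\Psi(u_n)\,dx\to 0$ follows immediately for every continuous $\Psi$ satisfying \eqref{eq:Psi}. Proceeding via Lemma \ref{lem:Conv} in this way avoids reproducing the truncation argument of its proof.

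First I would record the continuous embedding $\D\hookrightarrow L^2_{\mathrm{loc}}(\R^N)$. By the Gagliardo--Nirenberg--Sobolev inequality (equivalently, \eqref{eq:0.2} with $p=2$ and $j=m$) one has $\D\hookrightarrow L^{2^*}(\R^N)$ with $2^*=\frac{2N}{N-2m}>2$, so Hölder's inequality on any ball $B(0,r)$ gives $\|v\|_{L^2(B(0,r))}\leq C(r)\|v\|_{\D}$ for all $v\in\D$. Consequently the restriction operator $\D\to L^2(B(0,r))$ is bounded and linear, hence weakly continuous: if $v_n\weakto w$ in $\D$, then $v_n\weakto w$ in $L^2(B(0,r))$, and therefore
\[
\|w\|_{L^2(B(0,r))}\leq\liminf_{n\to\infty}\|v_n\|_{L^2(B(0,r))}
\]
by weak lower semicontinuity of the $L^2$-norm.

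Then I would argue by contradiction. Suppose \eqref{eq:LionsCond11} holds but $u_n(\cdot+y_n)\not\weakto 0$ in $\D$ for some $(y_n)\subset\Z^N$. Since $\D$ is a Hilbert space and $(u_n)$ --- hence $(u_n(\cdot+y_n))$ --- is bounded, after passing to a subsequence we may assume $u_n(\cdot+y_n)\weakto w$ in $\D$ with $w\neq 0$. Choose $r>0$ so large that $\|w\|_{L^2(B(0,r))}>0$. By the previous step,
\[
\liminf_{n\to\infty}\int_{B(0,r)}|u_n(x+y_n)|^2\,dx=\liminf_{n\to\infty}\|u_n(\cdot+y_n)\|_{L^2(B(0,r))}^2\geq\|w\|_{L^2(B(0,r))}^2>0.
\]
On the other hand, a change of variables gives $\int_{B(0,r)}|u_n(x+y_n)|^2\,dx=\int_{B(y_n,r)}|u_n|^2\,dx\leq\sup_{y\in\R^N}\int_{B(y,r)}|u_n|^2\,dx$, and the right-hand side tends to $0$ by \eqref{eq:LionsCond11} --- a contradiction. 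Hence $u_n(\cdot+y_n)\weakto 0$ in $\D$ for every $(y_n)\subset\Z^N$, and Lemma \ref{lem:Conv} finishes the proof.

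The argument is short once Lemma \ref{lem:Conv} is in hand; the only point requiring a little care is the transfer of weak convergence from $\D$ to bounded balls together with the resulting lower semicontinuity of the $L^2$-norm, for which the continuous chain $\D\hookrightarrow L^{2^*}(\R^N)\hookrightarrow L^2_{\mathrm{loc}}(\R^N)$ is exactly what is needed. I do not expect any further essential obstacle; in particular, no local \emph{compactness} of the embedding is required, since we only use weak lower semicontinuity rather than strong $L^2_{\mathrm{loc}}$-convergence.
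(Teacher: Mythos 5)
Your overall route is the same as the paper's: reduce everything to Lemma \ref{lem:Conv} and show by contradiction that \eqref{eq:LionsCond11} forces $u_n(\cdot+y_n)\weakto 0$ in $\D$ for every $(y_n)\subset\Z^N$. The only genuine difference is the tool used to reach the contradiction: the paper passes to a further subsequence converging \emph{strongly} in $L^2$ of a fixed ball (local compactness of the embedding), whereas you use only that the restriction $\D\to L^2(B)$ is a bounded linear map, hence weak--weak continuous, together with weak lower semicontinuity of the $L^2(B)$-norm. That is correct and in fact slightly more economical, since no compactness is invoked; the chain $\D\hookrightarrow L^{2^*}(\R^N)\hookrightarrow L^2_{\mathrm{loc}}(\R^N)$ you cite is exactly what is needed.

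One small point needs repair. The hypothesis \eqref{eq:LionsCond11} is assumed for \emph{one fixed} radius $r$, but you ``choose $r>0$ so large that $\|w\|_{L^2(B(0,r))}>0$'' and then apply \eqref{eq:LionsCond11} with this possibly larger radius. Either justify this by a covering argument (a ball of radius $R>r$ is covered by a number of balls of radius $r$ depending only on $N$ and $R/r$, so the vanishing condition for $r$ implies it for $R$), or, as the paper does, keep the given $r$ and move the center instead: since $w\neq 0$ there is $y\in\R^N$ with $w\chi_{B(y,r)}\neq 0$, and then
\[
\int_{B(y,r)}|u_n(x+y_n)|^2\,dx=\int_{B(y_n+y,r)}|u_n|^2\,dx\leq\sup_{z\in\R^N}\int_{B(z,r)}|u_n|^2\,dx\to 0,
\]
which produces the same contradiction through your weak lower semicontinuity step. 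With that one-line fix the argument is complete.
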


\begin{altproof}
	Suppose that there is $(y_n)\subset \Z^N$ such that $u_n(\cdot+y_n)$ does not converge weakly to $0$ in $\D$. Since  $u_n(\cdot+y_n)$ is bounded, there is $u_0\neq 0$ such that, up to a subsequence,
	$$u_n(\cdot+y_n)\weakto u_0\quad \text{in }\D,$$
	as $n\to\infty$. We find $y\in \R^N$ such that $u_0\chi_{B(y,r)}\neq 0$ in $L^2(B(y,r))$. 
	Observe that, passing to a subsequence, we may assume that $u_n(\cdot+y_n)\to u_0$ in $L^2(B(y,r))$. Then, in view of \eqref{eq:LionsCond11}
	$$\int_{B(y,r)} |u_n(\cdot+y_n)|^2\,dx=\int_{B(y_n+y,r)} |u_n|^2\,dx\to 0$$
	as $n\to\infty$, which contradicts the fact $u_n(\cdot+y_n)\to u_0\neq 0$ in $L^2(B(y,r))$. Therefore $u_n(\cdot+y_n)\weakto 0$ in $\D$ for any $(y_n)\subset \Z^N$ and by Lemma \ref{lem:Conv} we conclude.
\end{altproof}

\section{Proof of Theorem \ref{thm:2}}\label{sec:BL}

In this section, we adapt a variational framework inspired by \cite[Section 3]{Mederski} for the bi-Laplacian operator. Let us define the function
\[
G_-(s) :=
\begin{cases}
\displaystyle\int_0^s \max\{-g(t), 0\}\, dt & \text{if } s \geq 0, \\
\displaystyle\int_s^0 \max\{g(t), 0\}\, dt & \text{if } s < 0.
\end{cases}
\]
Observe that both \( G_+ \) and \( G_- \) are non-negative, and \( G = G_+ - G_- \).\\
\indent We begin by outlining the variational approach via a regularized energy functional \( J_\varepsilon \), along with a few auxiliary lemmas. The proof of Theorem~\ref{thm:2} is deferred to the end of the section.\\
\indent 
Let us define
\[
g_+(s) := G_+'(s), \qquad g_-(s) := g_+(s) - g(s), \qquad s \in \mathbb{R}.
\]
Then,
\[
G_-(s) = \int_0^s g_-(t)\, dt \geq 0, \quad \text{for all } s \in \mathbb{R}.
\]
Assuming conditions (g1) and (g3), there exists a constant \( c > 0 \) such that
\[
|G_+(s)| \leq c |s|^{2^*}, \quad \text{for all } s \in \mathbb{R},
\]
which implies \( G_+(u) \in L^1(\mathbb{R}^N) \) whenever \( u \in \cD^{m,2}(\R^N) \subset L^{2^*}(\mathbb{R}^N) \).
However, in general, \( G_-(u) \) may not be integrable unless it satisfies a similar growth bound, i.e., \( G_-(u) \leq c |u|^{2^*} \). To address this, for \( \varepsilon \in (0,1) \), we introduce a cutoff function \( \varphi_\varepsilon : \mathbb{R} \to [0,1] \) defined by
\[
\varphi_\varepsilon(s) :=
\begin{cases}
\displaystyle\frac{1}{\varepsilon^{2^*-1}} |s|^{2^*-1} & \text{if } |s| \leq \varepsilon, \\
1 & \text{if } |s| > \varepsilon.
\end{cases}
\]
Using this, we define the regularized functional
\[
J_\varepsilon(u) := \frac{1}{2} \int_{\mathbb{R}^N} |\nabla^m u|^2\, dx + \int_{\mathbb{R}^N} G_-^\varepsilon(u)\, dx - \int_{\mathbb{R}^N} G_+(u)\, dx,
\]
where
\[
G_-^\varepsilon(s) := \int_0^s \varphi_\varepsilon(t) g_-(t)\, dt.
\]
By assumption (g0), there exists a constant \( c(\varepsilon) > 0 \) such that
\[
|\varphi_\varepsilon(s) g_-(s)| \leq c(\varepsilon) |s|^{2^*-1}, \quad \text{for all } s \in \mathbb{R},
\]
which implies
\[
G_-^\varepsilon(s) \leq c(\varepsilon) |s|^{2^*}.
\]
Consequently, for each \( \varepsilon \in (0,1) \), the functional \( J_\varepsilon \) is well-defined on \( \D \), is continuous, and its derivative \( J_\varepsilon'(u)(v) \) exists for all \( u \in \D \), \( v \in \mathcal{C}_0^\infty(\mathbb{R}^N) \).  
We say that a function \( u \) is a \emph{critical point} of \( J_\varepsilon \) if
\[
J_\varepsilon'(u)(v) = 0 \quad \text{for all } v \in \mathcal{C}_0^\infty(\mathbb{R}^N).
\]
\indent Now define, for \( \varepsilon \in (0,1) \),
\[
\begin{aligned}
G_\varepsilon &:= G_+ - G_-^\varepsilon, \\
\mathcal{M}_\varepsilon &:= \left\{ u \in \mathcal{D} \setminus \{0\} : \int_{\mathbb{R}^N} |\nabla^m u|^2\, dx = 2^* \int_{\mathbb{R}^N} G_\varepsilon(u)\, dx \right\}, \\
\mathcal{P}_\varepsilon &:= \left\{ u \in \D : \int_{\mathbb{R}^N} G_\varepsilon(u)\, dx > 0 \right\} \neq \emptyset, \\
c_\varepsilon &:= \inf_{u \in \mathcal{M}_\varepsilon} J_\varepsilon(u).
\end{aligned}
\]
We also introduce the scaling map \( m_{\mathcal{P}_\varepsilon} : \mathcal{P}_\varepsilon \to \mathcal{M}_\varepsilon \) given by
\[
m_{\mathcal{P}_\varepsilon}(u)(x) := u(r_\varepsilon x),
\]
where the scaling parameter is defined by
\[
r_\varepsilon(u) := \left( \frac{2^* \int_{\mathbb{R}^N} G_\varepsilon(u)\, dx}{\int_{\mathbb{R}^N} |\nabla^m u|^2\, dx} \right)^{\frac{1}{2m}}.
\]
We verify that \( m_{\mathcal{P}_\varepsilon} \) is well-defined.  
Indeed, if \( u \in \mathcal{P}_\varepsilon \), then
\[
\begin{aligned}
\int_{\mathbb{R}^N} |\nabla^m m_{\mathcal{P}_\varepsilon}(u)(x)|^2\, dx 
&= r_\varepsilon^{2m - N} \int_{\mathbb{R}^N} |\nabla^m u|^2\, dx \\
&= \left(2^* \int_{\mathbb{R}^N} G_\varepsilon(u)\, dx \right) r_\varepsilon^{-N} \\
&= 2^* \int_{\mathbb{R}^N} G_\varepsilon(m_{\mathcal{P}_\varepsilon}(u)(x))\, dx,
\end{aligned}
\]
which shows that \( m_{\mathcal{P}_\varepsilon}(u) \in \mathcal{M}_\varepsilon \).

\begin{lemma}\label{lem:theta}
	Suppose that $(u_n)\subset \cM_\eps$, $J_\eps(u_n)\to c_\eps$ and
	$$u_n\weakto\tu\neq 0\hbox{ in } \D,\;u_n(x)\to\tu(x)\quad\hbox{ for a.e. }x\in\R^N$$ for some  $\tu\in \D$. Then $u_n\to\tu$, $\tu$  is a critical point of $J_\eps$ and $J_\eps(\tu)=c_\eps$.
\end{lemma}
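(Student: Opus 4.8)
\textbf{Proof plan for Lemma \ref{lem:theta}.}

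The plan is to follow the standard scheme for ground states on a Pohožaev-type manifold: first show that the weak limit $\tu$ is a critical point of $J_\eps$, then use the Pohožaev identity associated to $J_\eps$ together with weak lower semicontinuity to force $\tu\in\cM_\eps$ and $J_\eps(\tu)\le c_\eps$, and finally upgrade weak convergence to strong convergence by comparing norms. The key structural fact to exploit is that, because of the cutoff $\vp_\eps$, the truncated nonlinearity $G_\eps=G_+-G_-^\eps$ satisfies a two-sided growth bound $|G_\eps(s)|\le c(\eps)|s|^{2^*}$ and $|g_\eps(s)|\le c(\eps)|s|^{2^*-1}$, so that all the relevant nonlinear integrals are well defined and continuous along $L^{2^*}$-bounded sequences, and the associated action $J_\eps$ is genuinely $C^1$ on $\D$.

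First I would establish that $\tu$ is a critical point of $J_\eps$. Since $(u_n)\subset\cM_\eps$ with $J_\eps(u_n)\to c_\eps$ is bounded in $\D$, one shows $(u_n)$ is a Palais--Smale-type sequence for $J_\eps$ constrained to $\cM_\eps$; using that $\cM_\eps$ is a natural constraint (the Lagrange multiplier can be eliminated by testing against the scaling direction, exactly as in \cite{Mederski}), one gets $J_\eps'(u_n)\to 0$ in the appropriate dual sense. Passing to the limit in $J_\eps'(u_n)(v)=o(1)$ for each fixed $v\in\cC_0^\infty(\R^N)$: the quadratic part converges because $u_n\weakto\tu$ in $\D$, and the nonlinear part $\int g_\eps(u_n)v\,dx\to\int g_\eps(\tu)v\,dx$ follows from $u_n\to\tu$ a.e., the growth bound $|g_\eps(u_n)|\lesssim |u_n|^{2^*-1}$, boundedness in $L^{2^*}$, and a Vitali/Brezis--Lieb argument on the compact support of $v$. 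Hence $J_\eps'(\tu)=0$.

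Next, since $\tu$ is a critical point of $J_\eps$ with $G_\eps(\tu)\in L^1$, the Pohožaev identity (Theorem \ref{th:Poho}, applied to the nonlinearity $g_\eps$, noting $u_n,\tu$ enjoy the local regularity from Theorem \ref{theo_regularity}) gives $\tu\in\cM_\eps$, in particular $\tu\ne 0$ is admissible and $J_\eps(\tu)\ge c_\eps$. For the reverse inequality, I would use the representation valid on $\cM_\eps$, namely $J_\eps(u)=\bigl(\tfrac12-\tfrac{1}{2^*}\bigr)\int_{\R^N}|\nabla^m u|^2\,dx=\tfrac{m}{N}\|u\|^2$ for $u\in\cM_\eps$; then weak lower semicontinuity of the norm yields
\[
J_\eps(\tu)=\frac{m}{N}\|\tu\|^2\le \frac{m}{N}\liminf_{n\to\infty}\|u_n\|^2=\liminf_{n\to\infty}J_\eps(u_n)=c_\eps.
\]
Combining the two inequalities gives $J_\eps(\tu)=c_\eps$ and, crucially, $\|\tu\|^2=\lim_n\|u_n\|^2$; together with $u_n\weakto\tu$ in the Hilbert space $\D$ this forces $u_n\to\tu$ strongly.

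The main obstacle I anticipate is the passage to the limit in the nonlinear term establishing $J_\eps'(\tu)=0$ — more precisely, justifying $\int_{\R^N}g_\eps(u_n)v\,dx\to\int_{\R^N}g_\eps(\tu)v\,dx$ (and the analogous convergence $\int G_\eps(u_n)\,dx$ behaviour needed to locate $\tu$ on $\cM_\eps$), since a priori there is no compact embedding $\D\hookrightarrow L^{2^*}_{\mathrm{loc}}$ at the critical exponent and one only has a.e. convergence plus an $L^{2^*}$ bound. This is handled by restricting to the compact support of the test function $v$, where the subcritical Sobolev embedding is compact, so $u_n\to\tu$ in $L^q_{\mathrm{loc}}$ for every $q<2^*$, and then absorbing the critical part via the uniform integrability coming from the growth bound; a secondary technical point is verifying that $\cM_\eps$ is indeed a $C^1$-natural constraint so that the constrained Palais--Smale sequence is a free Palais--Smale sequence, which again uses the explicit scaling $m_{\cP_\eps}$ and the formula for $r_\eps(u)$ introduced above.
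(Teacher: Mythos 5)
There is a genuine gap at the heart of your plan: the step where you obtain $J_\eps'(\tu)=0$. You assert that the minimizing sequence $(u_n)\subset\cM_\eps$ is (after eliminating a Lagrange multiplier) a free Palais--Smale sequence, i.e.\ $J_\eps'(u_n)\to 0$, and that $J_\eps$ is ``genuinely $C^1$ on $\D$'' because $|g_\eps(s)|\le c(\eps)|s|^{2^*-1}$. Neither claim is justified in this setting. The cutoff $\vp_\eps$ controls only the negative part: one has $|\vp_\eps(s)g_-(s)|\le c(\eps)|s|^{2^*-1}$, but $g_+$ is only known to satisfy $|g_+(s)|\le c(1+|s|^{2^*-1})$ from $(g0)$ (conditions $(g1)$, $(g3)$ constrain the primitive $G_+$, not $g_+$), so $g_\eps(u)$ need not lie in $L^{(2^*)'}(\R^N)$ for $u\in\D$ and $J_\eps$ only admits directional derivatives along $v\in\cC_0^\infty(\R^N)$ --- this is exactly why the paper defines ``critical point'' in that directional sense. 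Consequently $\cM_\eps$ is not available as a $C^1$ natural constraint, and, independently of smoothness, an arbitrary minimizing sequence on a constraint need not be a Palais--Smale sequence; Ekeland's principle would produce a different (nearby) sequence and in any case requires the manifold structure you do not have. Since the lemma is a statement about the given sequence and its given weak limit $\tu$, this step cannot simply be waved through.

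The paper avoids this entirely: it never shows $J_\eps'(u_n)\to0$. Instead, for fixed $v\in\cC_0^\infty(\R^N)$ and small $t$, it uses $\int_{\R^N}G_\eps(u_n)\,dx\to A>0$ (from $u_n\in\cM_\eps$, $J_\eps(u_n)\to c_\eps$) to see that $u_n+tv\in\cP_\eps$, and then exploits minimality directly through the scaling map: $J_\eps(m_{\cP_\eps}(u_n+tv))\ge c_\eps$ yields the finite-$t$ inequality $\bigl(\tfrac12-\tfrac1{2^*}\bigr)^{2m/N}\int|\nabla^m(u_n+tv)|^2\,dx\ge c_\eps^{2m/N}\bigl(2^*\int G_\eps(u_n+tv)\,dx\bigr)^{(N-2m)/N}$. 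Expanding the quadratic term, passing $n\to\infty$ via a Vitali argument based on $|G_\eps(u+v)-G_\eps(u)|\le\delta|u|^{2^*}+c(\delta)|v|^{2^*}$, and then letting $t\to0^{\pm}$ gives $\int\nabla^m\tu\,\nabla^m v\,dx=\int g_\eps(\tu)v\,dx$ for all test functions, i.e.\ criticality of $\tu$ in the directional sense. From that point on your plan coincides with the paper: the Poho\v{z}aev identity for $g_\eps$ places $\tu\in\cM_\eps$, weak lower semicontinuity gives $c_\eps\le J_\eps(\tu)\le\liminf_n J_\eps(u_n)=c_\eps$, and norm convergence in the Hilbert space upgrades weak to strong convergence. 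So the back end of your argument is fine; the missing idea is the direct comparison with $J_\eps(m_{\cP_\eps}(u_n+tv))$ replacing the unavailable Palais--Smale/natural-constraint machinery.
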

\begin{proof}
	The growth conditions imply that  for every $\delta>0$ there is $c(\delta)>0$ such that 
	\[
	|G_\e (u+v) - G_\e(u)| \leq \delta |u|^{2^*} + c(\delta)|v|^{2^*},\quad u,\; v \in \R,
	\]
cf. \cite[Lemma 5.1]{Med-Siem}. Let us fix an arbitrary test function $v \in \mathcal{C}_0^\infty(\mathbb{R}^N)$ and a real number $t \in \mathbb{R}$. We observe that the sequence of functions
\[
G_\varepsilon(u_n + t v) - G_\varepsilon(u_n)
\]
is uniformly integrable and tight. Therefore, by Vitali's convergence theorem, we conclude that
\[
\lim_{n \to \infty} \int_{\mathbb{R}^N} \left[ G_\varepsilon(u_n + t v) - G_\varepsilon(u_n) \right]\, dx = \int_{\mathbb{R}^N} \left[ G_\varepsilon(\tilde{u} + t v) - G_\varepsilon(\tilde{u}) \right]\, dx.
\]
Since each $u_n \in \mathcal{M}_\varepsilon$, we also have
\[
c_\varepsilon = J_\varepsilon(u_n) = \frac{1}{2} \int_{\mathbb{R}^N} |\nabla^m u_n|^2\, dx - \int_{\mathbb{R}^N} G_\varepsilon(u_n)\, dx = \left( \frac{2^*}{2} - 1 \right) \int_{\mathbb{R}^N} G_\varepsilon(u_n)\, dx.
\]
Taking the limit as $n \to \infty$, we obtain
\begin{equation} \label{eq:1.34}
A := \lim_{n \to \infty} \int_{\mathbb{R}^N} G_\varepsilon(u_n)\, dx = \frac{1}{2^*} \left( \frac{1}{2} - \frac{1}{2^*} \right)^{-1} c_\varepsilon > 0.
\end{equation}
Combining the above with Vitali’s theorem yields:
\begin{equation} \label{eq:1.37}
\begin{aligned}
\lim_{n \to \infty} \int_{\mathbb{R}^N} G_\varepsilon(u_n + t v)\, dx 
&= \lim_{n \to \infty} \int_{\mathbb{R}^N} G_\varepsilon(u_n)\, dx + \int_{\mathbb{R}^N} G_\varepsilon(\tilde{u} + t v)\, dx - \int_{\mathbb{R}^N} G_\varepsilon(\tilde{u})\, dx \\
&= A + \int_{\mathbb{R}^N} G_\varepsilon(\tilde{u} + t v)\, dx - \int_{\mathbb{R}^N} G_\varepsilon(\tilde{u})\, dx.
\end{aligned}
\end{equation}
From \eqref{eq:1.34}, we know that $u_n + t v \in \mathcal{P}_\varepsilon$ for all sufficiently large $n$, provided that $|t|$ is small enough. Hence, using \eqref{eq:1.37}, we get
\[
\begin{aligned}
\lim_{n \to \infty} \frac{1}{t} \left[ \left( \int_{\mathbb{R}^N} G_\varepsilon(u_n + t v)\, dx \right)^{\frac{N - 2m}{N}} 
- \left( \int_{\mathbb{R}^N} G_\varepsilon(u_n)\, dx \right)^{\frac{N - 2m}{N}} \right] 
= \\
\frac{1}{t} \left[ \left( A + \int_{\mathbb{R}^N} G_\varepsilon(\tilde{u} + t v)\, dx - \int_{\mathbb{R}^N} G_\varepsilon(\tilde{u})\, dx \right)^{\frac{N - 2m}{N}} - A^{\frac{N - 2m}{N}} \right].
\end{aligned}
\]
By the Lebesgue dominated convergence theorem and differentiability of the power function, we conclude:
\begin{equation} \label{eq:thetain1}
\lim_{t \to 0} \frac{1}{t} \left[ \left( A + \int_{\mathbb{R}^N} G_\varepsilon(\tilde{u} + t v)\, dx - \int_{\mathbb{R}^N} G_\varepsilon(\tilde{u})\, dx \right)^{\frac{N - 2m}{N}} - A^{\frac{N - 2m}{N}} \right] 
= \frac{N - 2m}{N} A^{-\frac{2m}{N}} \int_{\mathbb{R}^N} g_\varepsilon(\tilde{u}) v\, dx,
\end{equation}
where \( g_\varepsilon := G_\varepsilon' = g_+ - \varphi_\varepsilon g_- \).
If \( u_n + t v \in \mathcal{P}_\varepsilon \), then by the definition of \( m_{\mathcal{P}_\varepsilon} \), we have \( J_\varepsilon(m_{\mathcal{P}_\varepsilon}(u_n + t v)) \geq c_\varepsilon \), which implies
\[
r_\varepsilon^{2m - N} \left( \frac{1}{2} - \frac{1}{2^*} \right) \int_{\mathbb{R}^N} |\nabla^m (u_n + t v)|^2\, dx \geq c_\varepsilon.
\]
Raising both sides to the power \( \frac{2m}{N} \) and using the definition of \( r_\varepsilon \), we derive the inequality
\begin{equation} \label{eq:1.35}
\left( \frac{1}{2} - \frac{1}{2^*} \right)^{\frac{2m}{N}} \int_{\mathbb{R}^N} |\nabla^m (u_n + t v)|^2\, dx \geq c_\varepsilon^{\frac{2m}{N}} \left( 2^* \int_{\mathbb{R}^N} G_\varepsilon(u_n + t v)\, dx \right)^{\frac{N - 2m}{N}}.
\end{equation}
In addition, since \( u_n \in \mathcal{M}_\varepsilon \) and \( J_\varepsilon(u_n) \to c_\varepsilon \), it follows that
\begin{equation} \label{eq:1.36}
\int_{\mathbb{R}^N} |\nabla^m u_n|^2\, dx \to \left( \frac{1}{2} - \frac{1}{2^*} \right)^{-1} c_\varepsilon.
\end{equation}
We now expand the squared gradient:
\[
\int_{\mathbb{R}^N} |\nabla^m(u_n + t v)|^2\, dx = \int_{\mathbb{R}^N} |\nabla^m u_n|^2\, dx + t^2 \int_{\mathbb{R}^N} |\nabla^m v|^2\, dx + 2t \int_{\mathbb{R}^N} \nabla^m u_n \cdot \nabla^m v\, dx.
\]
Thus,
\[
\frac{1}{2t} \left[ \int_{\mathbb{R}^N} |\nabla^m(u_n + t v)|^2\, dx - \int_{\mathbb{R}^N} |\nabla^m u_n|^2\, dx \right] = \int_{\mathbb{R}^N} \nabla^m u_n \cdot \nabla^m v\, dx + \frac{t}{2} \int_{\mathbb{R}^N} |\nabla^m v|^2\, dx.
\]
Using \eqref{eq:1.35} and the fact that \( u_n \in \mathcal{M}_\varepsilon \), for small \( t > 0 \), we obtain
\[
\begin{aligned}
\int_{\mathbb{R}^N} \nabla^m u_n \cdot \nabla^m v\, dx + \frac{t}{2} \int_{\mathbb{R}^N} |\nabla^m v|^2\, dx 
&\geq \frac{1}{2t} \left( c_\varepsilon^{\frac{2m}{N}} \left( \frac{1}{2} - \frac{1}{2^*} \right)^{-\frac{2m}{N}} \right) \\
&\quad \times \left[ \left( 2^* \int_{\mathbb{R}^N} G_\varepsilon(u_n + t v)\, dx \right)^{\frac{N - 2m}{N}} - \left( 2^* \int_{\mathbb{R}^N} G_\varepsilon(u_n)\, dx \right)^{\frac{N - 2m}{N}} \right].
\end{aligned}
\]
Passing to the limit as \( n \to \infty \), and using \eqref{eq:1.37}, \eqref{eq:1.34}, and \eqref{eq:1.36}, we deduce that for sufficiently small \( t > 0 \),
\[
\begin{aligned}
\int_{\mathbb{R}^N} \nabla^m \tilde{u} \cdot \nabla^m v\, dx + \frac{t}{2} \int_{\mathbb{R}^N} |\nabla^m v|^2\, dx 
\geq \frac{2^*}{2} A^{\frac{2m}{N}} \frac{1}{t} \left[ \left( A + \int_{\mathbb{R}^N} G_\varepsilon(\tilde{u} + t v)\, dx - \int_{\mathbb{R}^N} G_\varepsilon(\tilde{u})\, dx \right)^{\frac{N - 2m}{N}} - A^{\frac{N - 2m}{N}} \right].
\end{aligned}
\]
Letting \( t \to 0^+ \) and applying \eqref{eq:thetain1}, we conclude
\[
\int_{\mathbb{R}^N} \nabla^m \tilde{u} \cdot \nabla^m v\, dx \geq \int_{\mathbb{R}^N} g_\varepsilon(\tilde{u}) v\, dx.
\]
Since \( v \in \mathcal{C}_0^\infty(\mathbb{R}^N) \) was arbitrary, we infer that \( \tilde{u} \) is a weak solution (i.e., a critical point) of \( J_\varepsilon \).
Applying the Poho\v{z}aev identity (Theorem~\ref{th:Poho}) to the equation \( (-\Delta)^m u = g_\varepsilon(u) \) with \( G_\varepsilon \in L^1(\mathbb{R}^N) \), we deduce that \( \tilde{u} \in \mathcal{M}_\varepsilon \), which implies
\[
c_\varepsilon \leq J_\varepsilon(\tilde{u}) = \left( \frac{1}{2} - \frac{1}{2^*} \right) \int_{\mathbb{R}^N} |\nabla^m \tilde{u}|^2\, dx \leq \liminf_{n \to \infty} \left( \frac{1}{2} - \frac{1}{2^*} \right) \int_{\mathbb{R}^N} |\nabla^m u_n|^2\, dx = c_\varepsilon,
\]
where the weak lower semicontinuity of the norm was used. Therefore, \( J_\varepsilon(\tilde{u}) = c_\varepsilon \) and \( \|u_n\| \to \|\tilde{u}\| \), which yields strong convergence \( u_n \to \tilde{u} \) in \( \D\).
\end{proof}

\begin{proof}[Proof of Theorem \ref{thm:2}]
	Take a minimizing sequence $(u_n)$ in $\cM_\e$ of $J_\e$, i.e., $J_\e(u_n)\to c_\e$.
	Since $u_n\in \cM_\e$, $n\geq 1$, we have 
	\[
	J_\e(u_n) = \frac{1}{2}\int_{\R^N} |\nabla^m u_n|^2\, dx-\int_{\R^N} G_{\varepsilon}(u_n)\, dx=\le \frac{1}{2}-\frac{1}{2^*}\pr\int_{\R^N}|\nabla^m u_n|^2\d x \to c_\e,
	\]
	and so $(u_n)$ is bounded in $\D$.
	Moreover, we have 
	\[
	2^*\int_{\R^N}G _+(u_n)\d x \geq 	2^*\int_{\R^N}G _{\varepsilon}(u_n)\d x=\int_{\R^N}|\nabla^m u_n|^2\d x\to \le \frac{1}{2}-\frac{1}{2^*}\pr^{-1} c_\e.
	\]
	By the assumption $G_+$ satisfies \eqref{eq:Psi} and by the above inequality $\int_{\R^N}G_+(u_n)\, dx \not\to 0$ so by Lemma \ref{lem:Lions}  \eqref{eq:LionsCond11} is not satisfied.
	For the Lemma of Lions in this case we can take a subsequence, we may choose $(y_n)$ in $\R^N$ and $0\neq u_\e \in \D$ such that
	\[
	u_n(\cdot + y_n) \rightharpoonup u_\e \quad\text{in }\D,\quad u_n(x+y_n) \to u_\e(x) \quad \text{for a.e. }x\in \R^N,
	\]
	as $n\to \infty$.
	In view of Lemma \ref{lem:theta}, $u_\e\in \cM_\e$ is a critical point of $J_\e$ at the level $c_\e$.
	
	\noindent Choose $\e_n\to 0^+$.
	Fix an arbitrary $u\in \cM$.
	Since $G_\e(s) \geq G(s)$, for all $s\in \R$ and $\e \in (0,1)$, we deduce that
	\[
	\int_{\R^N}G_{\e_n}(u)\d x \geq\int_{\R^N}G(u)\d x = \frac{1}{2^*}\int_{\R^N}|\nabla^m u|^2 \d x >0,
	\]
	so $u \in \cP_{\e_n}$ and $m_{\cP_{\e_n}}(u)\in \cM_{\e_n}$ is well-defined. Since $u_{\e_n}$ is the infimum of $J_{\e_n}$ in $\cM_{\e_n}$, we have 
	\[
	\begin{aligned}
		J_{\e_n}(u_{\e_n}) &\leq J_{\e_n}(m_{\cP_{\e_n}}(u)) = \le \frac{1}{2}-\frac{1}{2^*}\pr \underbrace{\le\frac{2^*\int_{\R^N}G_{\e_n}(u)\d x}{\int_{\R^N}|\nabla^m u|^2 \d x}\pr^\frac{2m-N}{2m}}_{r_{\e_n}(u)^{4-N}}\int_{\R^N}|\nabla^m u|^2\d x\\
		&=\le\frac{1}{2} - \frac{1}{2^*} \pr\le \int_{\R^N}|\nabla^m u|^2\d x\pr^\frac{N}{2m}\le 2^*\int_{\R^N}G_{\e_n}(u)\d x \pr ^{-\frac{N-2m}{2m}}\\
		&\leq \le\frac{1}{2} - \frac{1}{2^*} \pr\le \int_{\R^N}|\nabla^m u|^2\d x\pr^\frac{N}{2m}\underbrace{\le 2^*\int_{\R^N}G(u)\d x \pr ^{-\frac{N-2m}{2m}}}_{=\le \int_{\R^N}|\nabla^m u |^2\d x \pr ^{-\frac{N-2m}{2m}}}\\
		&= \le\frac{1}{2} - \frac{1}{2^*} \pr \int_{\R^N}|\nabla^m u|^2\d x = J(u).
	\end{aligned}
	\]
	Thus $J_{\e_n} ( u_{\e_n})  \leq \inf_\cM J$ and 
	\begin{equation}\label{eq:1.38}
		\int_{\R^N}|\nabla^m u_{\e_n}|^2 \d x \leq \le \frac{1}{2}- \frac{1}{2^*}\pr^{-1} \inf_\cM J,\qquad\text{for every }n.
	\end{equation}
	We have $G_\e(s) \leq G_{1/2}(s)$, for all $s\in \R$ and $\e\in (0,1/2)$, so 
	\[
	\int_{\R^N} G_{1/2}(u_\e)\d x \geq \int_{\R^N} G_\e(u_\e)\d x = \frac{1}{2^*}\int_{\R^N} |\nabla^m u_\e|^2\d x> 0\implies u_\e \in \cP_{1/2},
	\]
	and some calculations yield
	\[
	J_\e(u_\e) \geq J_{1/2}(m_{\cP_{1/2}}(u_\e))\geq J_{1/2}(u_{1/2}).
	\]
	Therefore, we get
	\[
	2^* \int_{\R^N}G_+(u_{\e_n})\d x \geq \int_{\R^N}|\nabla^m u_{\e_n}|^2\d x  = \le \frac{1}{2}-\frac{1}{2^*}\pr ^{-1}J_{\e_n}(u_{\e_n})\geq \le \frac{1}{2}-\frac{1}{2^*}\pr ^{-1} J_{1/2}(u_{1/2}) >0.
	\]
	By \eqref{eq:1.38}, $(u_{\e_n})$ is bounded in $\D$ and $\int_{\R^N} G_+(u_{\e_n})\d x>c>0$, for some constant $c$.
	In view of Lemma \ref{lem:Lions}, \eqref{eq:LionsCond11} is not satisfied.
	Passing to a subsequence, there is $(y_n)$ in $\R^N$ such that $u_{\e_n}(\cdot + y_n)\rightharpoonup u_0\neq 0$ and $u_{\e_n}(x+y_n) \to u_0(x)$ a.e. in $\R^N$.
	We write $\tu_n:= u_{\e_n}(\cdot+y_n)$ for short.
	Since $g_-$ is continuous and $g_-(0)=0$, one may check that, for every $v\in C^\infty_0(\R^N)$,
	\[
	\left | \frac{1}{\e_n^{2\gg-1}}|\tu_n|^{2\gg-1}\chi_{\left\{|\tu_n|\leq \e_n\right\}} g_-(\tu_n)v \right|\leq \left|\chi_{\left\{|\tu_n|\leq \e_n\right\}} g_-(\tu_n)v  \right|  \to 0 \quad\text{a.e. in }\R^N
	\] 
	and 
	\[
	\left | \chi_{\left\{|\tu_n|> \e_n\right\}} g_-(\tu_n)v - g_-(u_0)v  \right| \to 0 \quad\text{a.e. in }\R^N.
	\]
	Due to the estimate $|g_-(\tu_n)v|\leq c\le 1 + |\tu_n|^{2^*-1}\pr |v|$, the family $\{g_-(\tu_n)v\}$ is uniformly integrable (and tight because of the compact support).
	In view of Vitali's convergence theorem
	\[
	\begin{aligned}
		&\int_{\R^N} \left | \f_{\e_n}(\tu_n)g_-(\tu_n)v - g_-(u_0)v\right | \d x \\
		&\quad \leq \int_{\R^N}\left | \frac{1}{\e_n^{2^*-1}}|\tu_n|^{2^*-1}\chi_{\left\{|\tu_n|\leq \e_n\right\}} g_-(\tu_n)v \right|\d x + \int_{\R^N}\left | \chi_{\left\{|\tu_n|> \e_n\right\}} g_-(\tu_n)v - g_-(u_0)v  \right|\d x\to 0,
	\end{aligned}
	\]
	as $n\to \infty$.
	Similarly, we obtain
	\[
	\int_{\R^N} g_+(\tu_n)v \d x \to \int_{\R^N} g_+(u_0)v \d x.
	\]
	Gathering the above we deduce that
	\[
	\begin{aligned}
		J_{\e_n}^\prime(\tu_n)(v) &= \int_{\R^N}\nabla^m\tu_n \nabla^m v \d x - \int_{\R^N}g_+(\tu_n)v \d x + \int_{\R^N}\f_{\e_n}(\tu_n) g_-(\tu_n) v\d x\\
		&\to \int_{\R^N}\nabla^m u_0 \nabla^m v \d x - \int_{\R^N}g_+(u_0)v\d x + \int_{\R^N} g_-(u_0)v\d x.
	\end{aligned}
	\]
	Each $\tu_n$ is  a critical point of $J_{\e_n}$, since so is $u_{\e_n}$ (translation invariance), hence
	\[
	\int_{\R^N}\nabla^m u_0 \nabla^m v \d x =  \int_{\R^N}g(u_0)v\d x,
	\]
	i.e., $u_0$ is a weak solution to \eqref{problem}.
	By Lebesgue's dominated convergence theorem one may show that
	\[
	G^{\e_n}_-(\tu_n) \to G_-(u_0) \quad \text{a.e. in }\R^N,
	\]
	as $n\to\infty$, and, on the other hand, 
	\[
	2^* \int_{\R^N}G^{\e_n}_-(\tu_n)\d x = 2^* \int_{\R^N}G_+(\tu_n)\d x - \int_{\R^N}|\nabla^m \tu_n|^2\d x\leq c\le \sup_{n\geq 1}\|\tu_n\|_{\D}\pr <\infty,
	\]
	where we used the fact that $\tu_n\in \cM_{\e_n}$ and \eqref{eq:1.38}.
	By Fatou's lemma and by the above
	\[
	\int_{\R^N}G_-(u_0)\d x\leq \liminf_{n\to\infty}\int_{\R^N}G^{\e_n}_-(\tu_n)\d x < \infty,
	\]
	namely, we have shown that $G_-(u_0)\in L^1(\R^N)$.
	By the Poho\v{z}aev identity, we infer that $u_0\in \cM$.
	Lastly, we show that $J(u_0) = \inf_\cM J$.
	We use the weak l.s.c. of the norm and \eqref{eq:1.38} to find that
	\[
	\begin{aligned}
		J(u_0) &= \le \frac{1}{2}- \frac{1}{2^*}\pr\int_{\R^N}|\nabla^m u_0|^2\d x\leq \liminf_{n\to\infty} \le \frac{1}{2}- \frac{1}{2^*}\pr\int_{\R^N}|\nabla^m \tu_n|^2\d x\\
		&= \liminf_{n\to\infty} \le \frac{1}{2}- \frac{1}{2^*}\pr\int_{\R^N}|\nabla^m u_{\e_n}|^2\d x\leq \inf_\cM J.
	\end{aligned}
	\]
\end{proof}

 \section{Polyharmonic Logarithmic Sobolev inequality}\label{sec:Log}
\begin{lemma}\label{Poly-inequality}
Let $m \in \N$, $m\geq1$ and  \( u \in \mathcal{D}^{m+1,2}(\R^N) \) such that $\int_{\R^N} |u|^2\, dx=1$. Then
\begin{equation} \label{Poly-inequality}
\left( \int_{\mathbb{R}^N} |\nabla^m  u|^2 \, dx \right)^{\frac{1}{m}} 
< \Big( \int_{\R^N} |\nabla^{m+1} u|^2\, dx \Big)^{\frac{1}{m+1}}
\end{equation}

\end{lemma}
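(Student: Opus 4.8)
The plan is to pass to the Fourier side, where the claim becomes a strict comparison between two moments of a probability measure. Write $\hat u$ for the Fourier transform of $u$. As in the computation recorded in Section~\ref{sec:Poh}, for the operator $\nabla^m$ one has $\widehat{\nabla^m u}(\xi) = (-1)^{m/2}|\xi|^m\hat u(\xi)$ when $m$ is even and $\widehat{\nabla^m u}(\xi) = (-1)^{(m-1)/2}\,i\,|\xi|^{m-1}\xi\,\hat u(\xi)$ when $m$ is odd, so in either parity Plancherel's theorem gives
\[
\int_{\R^N}|\nabla^m u|^2\,dx = \int_{\R^N}|\xi|^{2m}|\hat u(\xi)|^2\,d\xi,\qquad
\int_{\R^N}|\nabla^{m+1}u|^2\,dx = \int_{\R^N}|\xi|^{2(m+1)}|\hat u(\xi)|^2\,d\xi,
\]
while the normalization $\int_{\R^N}|u|^2\,dx=1$ reads $\int_{\R^N}|\hat u|^2\,d\xi=1$. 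Both right-hand sides are finite: the second because $u\in\cD^{m+1,2}(\R^N)$, the first because $|\xi|^{2m}\le 1+|\xi|^{2(m+1)}$ and hence $\int_{\R^N}|\xi|^{2m}|\hat u|^2\,d\xi\le 1+\int_{\R^N}|\xi|^{2(m+1)}|\hat u|^2\,d\xi<\infty$.

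Now set $d\mu:=|\hat u(\xi)|^2\,d\xi$, a Borel probability measure on $\R^N$, and let $X(\xi):=|\xi|^{2(m+1)}\ge 0$, which belongs to $L^1(\mu)$. Since $s\mapsto s^{m/(m+1)}$ is strictly concave on $[0,\infty)$, Jensen's inequality gives
\[
\int_{\R^N}|\xi|^{2m}\,d\mu = \int_{\R^N}X^{\frac{m}{m+1}}\,d\mu \le \Big(\int_{\R^N}X\,d\mu\Big)^{\frac{m}{m+1}} = \Big(\int_{\R^N}|\xi|^{2(m+1)}\,d\mu\Big)^{\frac{m}{m+1}},
\]
which, after raising to the power $1/m$ and substituting the Plancherel identities above, is exactly $\big(\int_{\R^N}|\nabla^m u|^2\big)^{1/m}\le\big(\int_{\R^N}|\nabla^{m+1}u|^2\big)^{1/(m+1)}$. (Equivalently one may apply H\"older's inequality with exponents $\tfrac{m+1}{m}$ and $m+1$ to $|\xi|^{2m}|\hat u|^2 = \big(|\xi|^{2(m+1)}|\hat u|^2\big)^{m/(m+1)}\cdot\big(|\hat u|^2\big)^{1/(m+1)}$ and use $\int_{\R^N}|\hat u|^2=1$.)

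It remains to upgrade this to a strict inequality, which is the only delicate point. Equality in Jensen's inequality for the strictly concave function $s\mapsto s^{m/(m+1)}$ forces $X$ to be $\mu$-almost everywhere equal to a constant, i.e.\ there is $R\ge 0$ with $\mu\big(\R^N\setminus\{|\xi|=R\}\big)=0$; equivalently, up to a Lebesgue-null set $\hat u$ is supported on the sphere $\{|\xi|=R\}$. But $\mu$ is absolutely continuous with respect to Lebesgue measure, with density $|\hat u|^2\in L^1(\R^N)$ that is not identically zero (since $\int_{\R^N}|\hat u|^2=1$), whereas every sphere in $\R^N$ is Lebesgue-null; contradiction. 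Hence the inequality is strict, proving \eqref{Poly-inequality}. Conceptually there is no genuine analytic obstacle: the estimate itself is just the log-convexity of the moments $k\mapsto\int_{\R^N}|\xi|^{2k}|\hat u|^2\,d\xi$ evaluated at the consecutive values $k=m,\,m+1$, and the whole content of the lemma is the remark that the $L^2$-normalization rules out the equality case because a nonzero $L^1$ density on $\R^N$ cannot concentrate on a single sphere.
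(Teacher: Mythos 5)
Your proof is correct. The Plancherel reductions are the same ones the paper uses, and the Jensen step (equivalently, the H\"older splitting of $|\xi|^{2m}|\hat u|^2$) is valid: with $d\mu=|\hat u|^2\,d\xi$ a probability measure thanks to the normalization, strict concavity of $s\mapsto s^{m/(m+1)}$ gives the non-strict inequality, and the equality case forces $|\xi|^{2(m+1)}$ to be $\mu$-a.e.\ constant, i.e.\ $\hat u$ concentrated on a sphere, which is incompatible with $|\hat u|^2\in L^1(d\xi)$ having total mass $1$; the degenerate cases (either moment vanishing) are excluded the same way. Where you differ from the paper: after the same Fourier-side setup, the paper argues by induction on $m$, citing the biharmonic case $m=1$ from \cite{Med-Siem} as the base case (this is where the $L^2$-normalization enters there) and performing the inductive step via the Cauchy--Schwarz log-convexity estimate between the moments of orders $m$, $m+1$, $m+2$. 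Your argument is a one-shot interpolation between the exponents $0$ and $m+1$, so it is self-contained (no appeal to \cite{Med-Siem}) and, importantly, it makes the strictness explicit through the equality-case analysis --- a point the paper's displayed induction, which is written with non-strict inequalities throughout, leaves implicit (there strictness is only inherited from the base case). The trade-off is negligible: both proofs use only Plancherel plus a convexity inequality; yours is shorter, proves strictness directly, and exhibits the statement as log-convexity of the moment function $k\mapsto\int_{\R^N}|\xi|^{2k}|\hat u|^2\,d\xi$ anchored at $k=0$ by the normalization.
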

\begin{proof}
    Let us define the Fourier transform $\hat{u}$ of $u$ (whenever
possible) as
\[
\hat{u}(\xi) = \frac{1}{(2\pi)^{N/2}} \int_{\mathbb{R}^N} e^{-i x \cdot \xi} u(x) \, dx, \quad \xi \in \mathbb{R}^N.
\]
If $u \in \mathcal{D}^{m,2}(\R^N)$ and 
\[
\int_{\mathbb{R}^N} |u|^2 \, dx = 1,
\]
then $u \in W^{m,2}(\mathbb{R}^N)$, and by the Plancherel theorem
\[
\| u \|_{L^2(\mathbb{R}^N)} = \| \hat{u} \|_{L^2(\mathbb{R}^N)},
\]
\[
\| \nabla u \|_{L^2(\mathbb{R}^N)} = \| \reallywidehat{\nabla u} \|_{L^2(\mathbb{R}^N)} = \| \xi \hat{u} \|_{L^2(\mathbb{R}^N)},
\]
\[
\| \Delta u \|_{L^2(\mathbb{R}^N)} = \|\reallywidehat{ \Delta u} \|_{L^2(\mathbb{R}^N)} = \| |\xi|^2 \hat{u} \|_{L^2(\mathbb{R}^N)}.
\]\[
\| \nabla^m u \|_{L^2(\mathbb{R}^N)} = \| \mathrm{\reallywidehat{\nabla^m u }} \|_{L^2(\mathbb{R}^N)} = \| |\xi|^{m}\hat{u} \|_{L^2(\mathbb{R}^N)}.
\] By induction we have \eqref{Poly-inequality}. Indeed, for $m=1$ it is true by \cite[Lemma 5.1]{Med-Siem}. Then suppose true for $m$, we show that it is true for $m+1$.\bigskip
\noindent 
$$
\begin{aligned}
    \Big(\int_{\R^N} ||\xi|^{m+1} \hat u|^2 \, d\xi\Big)^{\frac{1}{m+1}}& \leq  \Big(\int_{\R^N} ||\xi|^m\hat u|^2\, d\xi  \Big)^{\frac{1}{2(m+1)}}\Big( \int_{\R^N} ||\xi|^{m+2} \hat u|^2\, d\xi\Big)^{\frac{1}{2(m+1)}}\\ &\leq \Big(\int_{\R^N} ||\xi|^{m+1}\hat u|^2\, d\xi  \Big)^{\frac{m}{2(m+1)^2}}\Big( \int_{\R^N} ||\xi|^{m+2} \hat u|^2\, d\xi\Big)^{\frac{1}{2(m+1)}}.
\end{aligned}
$$

$$
\begin{aligned}
    \Big(\int_{\R^N} ||\xi|^{m+1} \hat u|^2 \, d\xi\Big)^{\frac{1}{m+1}-\frac{m}{2(m+1)^2}}\leq \Big( \int_{\R^N} ||\xi|^{m+2} \hat u|^2\, d\xi\Big)^{\frac{1}{2(m+1)}}. 
\end{aligned}
$$
Finally we have 
$$
\begin{aligned}
    \Big(\int_{\R^N} ||\xi|^{m+1} \hat u|^2 \, d\xi\Big)^{\frac{1}{m+1}}\leq \Big( \int_{\R^N} ||\xi|^{m+2} \hat u|^2\, d\xi\Big)^{\frac{1}{m+2}}. 
\end{aligned}
$$

\end{proof}

\begin{proof}[Proof of Theorem \ref{th1.4}]
We show that the following inequality is satisfied
\begin{equation}
    \left(\int_{\mathbb{R}^N} |\nabla^m u|^2 \, dx\right)^{\frac{N}{N-2m}} \geq C_{N, \log} \int_{\mathbb{R}^N} |u|^2 \log |u| \, dx, \quad \text{for any } u \in \D, \label{eq:6.1}
\end{equation}
where
\begin{equation}
C_{N, \log} = 2^{*} \left( \frac{1}{2} - \frac{1}{2^{*}} \right)^{- \frac{2m}{N-2m}} \inf_{\mathcal{M}} J^{\frac{2m}{N-2m}}.
\end{equation}
Indeed, it is enough to consider \( u \in\D \) such that \( \int_{\mathbb{R}^N} |u|^2 \log |u| \, dx > 0 \). We then obtain \( u(r) \in \mathcal{M}, \) where
\begin{equation}
r := \left(\frac{2^{*} \int_{\mathbb{R}^N} |u|^2 \log |u| \, dx}{\int_{\mathbb{R}^N} |\nabla^m u|^2} \right)^{\frac{1}{2m}}.
\end{equation}
Indeed 
\begin{equation*}\begin{aligned}
    & \int_{\mathbb{R}^N} |\nabla^m u(rx)|^2\, dr=r^{2m-N}\int_{\mathbb{R}^N} |\nabla^m u|^2 \, dx = \left(\frac{2^{*} \int_{\mathbb{R}^N} |u|^2 \log |u| \, dx}{\int_{\mathbb{R}^N} |\nabla^m u|^2} \right)^{\frac{2m-N}{2m}}\int_{\mathbb{R}^N} |\nabla^m u|^2 \, dx \\ & = 2^{*} \int_{\mathbb{R}^N} |u|^2 \log |u| \, dx\bigg( \frac{\int_{\mathbb{R}^N} |\nabla^m u|^2 \, dx}{2^{*} \int_{\mathbb{R}^N} |u|^2 \log |u| \, dx} \bigg)^{1-\frac{2m-N}{2m}} \\ & =2^{*} \int_{\mathbb{R}^N} |u|^2 \log |u| \, dx\bigg( \frac{\int_{\mathbb{R}^N} |\nabla^m u|^2 \, dx}{2^{*} \int_{\mathbb{R}^N} |u|^2 \log |u| \, dx} \bigg)^{1-\frac{N}{2m}} \\ &= 2^{*} \int_{\mathbb{R}^N} |u|^2 \log |u| \, r^{-N}\,  dx =2^{*} \int_{\mathbb{R}^N} |u(rx)|^2 \log |u(xr)| \, dx.
\end{aligned}
\end{equation*}

Hence \( J(u(r)) \geq \inf_{\mathcal{M}} J \) and therefore
\begin{equation*}
\begin{aligned}
    & \frac{1}{2} \int_{\mathbb{R}^N} |\nabla^m u(rx)| dx- \int_{\mathbb{R}^N} |u(rx)|^2 \log |u(rx)| \, dx=\bigg( \frac{1}{2}-\frac{1}{2^{*}}\bigg) r^{2m-N} \int_{\mathbb{R}^N} |\nabla^m u|^2 \, dx\\ &= \bigg( \frac{1}{2}-\frac{1}{2^{*}}\bigg)\left(\frac{2^{*} \int_{\mathbb{R}^N} |u|^2 \log |u| \, dx}{\int_{\mathbb{R}^N} |\nabla^m u|^2} \right)^{\frac{2m-N}{2m}} \int_{\mathbb{R}^N} |\nabla^m u|^2 \, dx\\ & = \bigg( \frac{1}{2}-\frac{1}{2^{*}}\bigg) \bigg(2^{*} \int_{\mathbb{R}^N} |u|^2 \log |u| \, dx\bigg)^{\frac{2m-N}{N}} \bigg(\int_{\mathbb{R}^N} |\nabla^m u|^2 \, dx\bigg)^{\frac{N}{2m}}\geq (\inf\limits_{\mathcal{M}} J).
\end{aligned}
\end{equation*}
\begin{equation*}
\begin{aligned}
& \bigg(\int_{\mathbb{R}^N} |\nabla^m u|^2 \, dx\bigg)^{\frac{N}{2m}}\geq \bigg( \frac{1}{2}-\frac{1}{2^{*}}\bigg)^{-1} \bigg(2^{*} \int_{\mathbb{R}^N} |u|^2 \log |u| \, dx\bigg)^{-\frac{2m-N}{N}}(\inf\limits_{\mathcal{M}} J) \\ &\bigg(\int_{\mathbb{R}^N} |\nabla^m u|^2 \, dx\bigg)^{\frac{N}{N-2m}}\geq \bigg(\frac{1}{2}-\frac{1}{2^{*}}   \bigg)^{-\frac{N-2m}{2m}} 2^{*} \int_{\mathbb{R}^N} |u|^2 \log |u| \, dx \, (\inf\limits_{\mathcal{M}} J)^{\frac{2m}{N-2m}}
    \end{aligned}
    \end{equation*}
and we get \eqref{eq:6.1}.
Now note that \eqref{eq:6.1} is equivalent to
\begin{equation}\label{eq:6.2}
    \left(\int_{\mathbb{R}^N} |\nabla^m u|^2 \, dx\right)^{\frac{N}{N-2m}} \geq C_{N, \log} \max_{\alpha \in \mathbb{R}} \left\{ e^{- \alpha 2^{*}} \int_{\mathbb{R}^N} |e^\alpha u|^2 \log |e^\alpha u| \, dx \right\}, \quad \text{for } u \in D^{m,2}(\mathbb{R}^N). 
\end{equation}

Assuming that \( \int_{\mathbb{R}^N} |u|^2 \, dx = 1 \), the maximum of the right-hand side of \eqref{eq:6.2} is attained at 
\[
\alpha_1 = \frac{N-2m}{4m} -  \int_{\mathbb{R}^N} |u|^2 \log |u| \, dx.
\] Indeed $f(\alpha):= e^{(2-2^{*})\alpha}(\alpha \int_{\mathbb{R}^N} |u|^2\, dx + \int_{\mathbb{R}^N} |u|^2 \log |u| \, dx )$ and we computed $$f'(\alpha)=e^{(2-2^{*})\alpha}((2-2^{*})\alpha+ (2-2^{*})\int_{\mathbb{R}^N} |u|^2\log |u|\, dx + 1 ).$$
Hence we get
\begin{equation*}\begin{aligned}
 & \frac{N}{N-2m} \log \left(\int_{\mathbb{R}^N} |\nabla^m u|^2 \, dx \right) \geq \log(C_{N, \log}) - \alpha_1 2^{*} + 2 \alpha_1 + \log\left(\alpha_1  + \int_{\mathbb{R}^N} |u|^2 \log |u| \, dx\right) \\ & \frac{N}{N-2m} \log \left(\int_{\mathbb{R}^N} |\nabla^m u|^2 \, dx \right) \geq \log(C_{N, \log}) + (2-2^{*}) \alpha_1 + \log\left(\frac{N-2m}{4m}\right)
\end{aligned}
\end{equation*}
by definition of $\alpha_1$ and $2^{*}$ we have \begin{equation*}
\begin{aligned} \frac{N}{N-2m} \log \left(\int_{\mathbb{R}^N} |\nabla^m u|^2 \, dx \right) \geq &\log(C_{N, \log}) -\frac{4m}{N-2m}\bigg(\frac{N-2m}{4m}-\int_{\mathbb{R}^N} |u|^2 \log |u|\, dx\bigg) \\ & + \log\left(\frac{N-2m}{4m}\right) 
\end{aligned}
\end{equation*}
That is,
\begin{equation}
\frac{N}{N-2m} \log \left(\int_{\mathbb{R}^N} |\nabla^m u|^2 \, dx \right) \geq \log\left( C_{N, \log} \frac{N-2m}{4m} e^{-1} \right) + \frac{4m}{N-2m} \int_{\mathbb{R}^N} |u|^2 \log |u| \, dx.
\end{equation}

Thus,
\begin{equation}\label{eq:1.7}
\frac{N}{4m} \log \int_{\mathbb{R}^N} |\nabla^m u|^2 \, dx  \geq \frac{N-2m}{4m} \log \left( C_{N, \log} \frac{N-2m}{4m} e^{-1} \right) + \int_{\mathbb{R}^N} |u|^2 \log |u| \, dx.
\end{equation}
\begin{equation*}\begin{aligned}
    & \log \bigg(\frac{\bigg(\int_{\mathbb{R}^N} |\nabla^m u|^2 \, dx\bigg)^{\frac{N}{4m}}}{\bigg(C_{N,\log}\frac{N-2m}{4me}\bigg)^{\frac{N-2m}{4m}}} \bigg)  \geq \int_{\mathbb{R}^N} |u|^2 \log |u| \, dx \\ &\frac{N}{4m} \log\bigg( \bigg(  \frac{4m e}{C_{N,\log}(N-2m)}   \bigg)^{\frac{N-2m}{N}} \int_{\mathbb{R}^N} |\nabla^mu |^2\, dx \bigg) \geq   \int_{\mathbb{R}^N} |u|^2 \log |u| \, dx
\end{aligned}
\end{equation*}
Thus \eqref{eq:6.1} holds.
\\
We show that the constant in \eqref{eq: 1.7} is optimal, that is, there is \( u \in \D \) such that the equality holds. First of all, notice that if \( u_0 \) is a minimizer given by Theorem \ref{thm:2}, then for \( u_0 \) we have the equality in \eqref{eq:6.1}:
\begin{equation}\label{eq: id u_0}
    \left(\int_{\mathbb{R}^N} |\nabla^m u_0|^2 \, dx \right)^{\frac{N}{N-2m}} = C_{N, \log} \int_{\mathbb{R}^N} |u_0|^2 \log |u_0| \, dx. 
\end{equation}

We use \eqref{eq:6.1} for the family of functions \( \frac{e^{\alpha }}{\|u_0\|_{L^2}} u_0 \in\D, \, \alpha \in \mathbb{R}, \) to get
\begin{equation}
    \left(\int_{\mathbb{R}^N} |\nabla^m u_0|^2 \, dx\right)^{\frac{N}{N-2m}} \geq C_{N, \log} \|u_0\|_{L^2}^{2^{*}-2} e^{ (2^{*} - 2) \alpha} \int_{\mathbb{R}^N} |u_0|^2 \log \left|\frac{e^{\alpha}}{\|u_0\|_{L^2} } u_0 \right| \, dx, \quad \alpha \in \mathbb{R}. \label{eq:6.4}
\end{equation}
Now let us consider the function \( f: \mathbb{R} \rightarrow \mathbb{R} \) given by
\begin{equation}
f(\alpha) := C_{N, \log} \|u_0\|_{L^2}^{(2^{*}-2)} e^{(2- 2^{*} ) \alpha} \int_{\mathbb{R}^N} |u_0|^2 \log \left| \frac{e^{\alpha}}{\|u_0\|_{L^2} } u_0 \right| \, dx - \left(\int_{\mathbb{R}^N} |\nabla^m u_0|^2 \, dx\right)^{\frac{N}{N-2m}}.
\end{equation}
Computing the first derivative \begin{equation*}\begin{aligned}
   & f'(\alpha)= C_{N, \log} \|u_0\|_{L^2}^{(2^{*}-2)} e^{(2-2^{*})\alpha}\bigg((2-2^{*}) \bigg(\alpha \int_{\mathbb{R}^N} |u_0|^2 \, dx - \int_{\mathbb{R}^N} |u_0|^2\log|\frac{u_0}{\|u_0\|_{L^2}}|     \bigg) +\int_{\mathbb{R}^N} |u_0|^2 \, dx \bigg) \\ & =  C_{N, \log} \|u_0\|_{L^2}^{2^{*}} e^{(2-2^{*})}\bigg((2-2^{*})\bigg(\alpha- \int_{\mathbb{R}^N} |\frac{u_0}{\|u_0\|_{L^2}}|^2\log|\frac{u_0}{\|u_0\|_{L^2}}|     \bigg) +1 \bigg),
\end{aligned}\end{equation*} 
and imposed the condition $f'(\alpha)\geq 0$, we note that $f$ has only one maximum point
\begin{equation}
 \alpha_2 = \frac{N-2m}{4m} -  \int_{\mathbb{R}^N} \bigg|\frac{u_0}{\|u_0\|_{L^2}}\bigg|^2 \log \left|\frac{u_0}{\|u_0\|_{L^2}}\right| \, dx.
\end{equation}

On the other hand, \( f \) attains maximum at \( \alpha = \log(\|u_0\|_{L^2}) \) in view of \eqref{eq:6.4} and \eqref{eq: id u_0}, thus
\begin{equation*}\begin{aligned}
    & \int_{\mathbb{R}^N} \bigg|\frac{u_0}{\|u_0\|_{L^2}}\bigg|^2 \log \left|\frac{u_0}{\|u_0\|_{L^2}}\right| \, dx = \frac{N-2m}{4m} - \log(\|u_0\|_{L^2}) \\& \int_{\mathbb{R}^N} \bigg|\frac{u_0}{\|u_0\|_{L^2}}\bigg|^2 \log \left|u_0\right| \, dx - \int_{\mathbb{R}^N} \bigg|\frac{u_0}{\|u_0\|_{L^2}}\bigg|^2 \log \|u_0\|_{L^2} = \frac{N-2m}{4m} - \log(\|u_0\|_{L^2})
\end{aligned}\end{equation*}since $u_0 \in \mathcal{M}$
\begin{equation*}\begin{aligned}
    &\frac{1}{2^{*}\|u_0\|_{L^2}} \int_{\mathbb{R}^N} |\nabla^mu_0|^2 \, dx= \frac{N-2m}{4m} \\ & \frac{1}{\|u_0\|_{L^2}} \int_{\mathbb{R}^N} |\nabla^mu_0|^2 \, dx=\frac{2N}{N-2m} \frac{N-2m}{4m}
\end{aligned}\end{equation*}
\begin{equation}
\frac{1}{\|u_0\|_{L^2}} \int_{\mathbb{R}^N} |\nabla^mu_0|^2 \, dx= \frac{N}{2m}.
\end{equation} We note that 
\begin{equation}\label{f=0}
    f(\log \| u_0\|_{L^2})=0.
\end{equation} Indeed by \eqref{eq: id u_0}
\begin{equation*}
\begin{aligned}
    & f(\log \| u_0\|_{L^2})\\ &=
    C_{N, \log} \|u_0\|_{L^2}^{(2^{*}-2)} e^{(2- 2^{*} ) \log \| u_0\|_{L^2}}  
    \int_{\mathbb{R}^N} |u_0|^2 \log \left| \frac{e^{\log \| u_0\|_{L^2}}}{\|u_0\|_{L^2} } u_0 \right| \, dx - \left(\int_{\mathbb{R}^N} |\nabla^m u_0|^2 \, dx\right)^{\frac{N}{N-2m}} \\
    &=C_{N,\log} \int_{\mathbb{R}^N} |u_0|^2 \log \left|  u_0 \right| \, dx - \left(\int_{\mathbb{R}^N} |\nabla^m u_0|^2 \, dx\right)^{\frac{N}{N-2m}}=0
\end{aligned}
\end{equation*}
By \eqref{f=0} and the unicity of the maximum we have
\begin{equation*}\begin{aligned}
    & f(\alpha_2)= 0 \\ & C_{N, \log} \|u_0\|_{L^2}^{(2^{*}-2)} e^{(2- 2^{*} ) \alpha_2} \int_{\mathbb{R}^N} |u_0|^2 \log \left| \frac{e^{\alpha_2}}{\|u_0\|_{L^2} } u_0 \right| \, dx - \left(\int_{\mathbb{R}^N} |\nabla^m u_0|^2 \, dx\right)^{\frac{N}{N-2m}}=0\\ &
   \left(\int_{\mathbb{R}^N} \bigg| \frac{\nabla^m u_0}{\|u_0\|_{L^2}}\bigg|^2 \, dx\right)^{\frac{N}{N-2m}} = C_{N, \log}  e^{(2- 2^{*} ) \alpha_2} \int_{\mathbb{R}^N} \bigg|\frac{u_0}{\|u_0\|_{L^2}}\bigg|^2 \log \left| \frac{e^{\alpha_2}}{\|u_0\|_{L^2} } u_0 \right| \, dx  \\ & 
   \frac{N}{N-2m}\log\bigg(\int_{\mathbb{R}^N} \bigg| \frac{\nabla^m u_0}{\|u_0\|_{L^2}} \bigg|^2 \, dx \bigg)= \log C_{N, \log} + \log e^{(2- 2^{*} ) \alpha_2}+ \log\bigg(\alpha_2 +\int_{\mathbb{R}^N} \bigg|\frac{u_0}{\|u_0\|_{L^2}}\bigg|^2 \log \left| \frac{u_0}{\|u_0\|_{L^2} } \right| \, dx  \bigg)
\end{aligned} \end{equation*}
sice $\alpha_2 = \frac{N-2m}{4m} -  \int_{\mathbb{R}^N} \bigg|\frac{u_0}{\|u_0\|_{L^2}}\bigg|^2 \log \left|\frac{u_0}{\|u_0\|_{L^2}}\right| \, dx$
\begin{equation*}\begin{aligned}
    &\frac{N}{N-2m}\log\bigg(\int_{\mathbb{R}^N}\bigg| \frac{\nabla^m u_0}{\|u_0\|_{L^2}}\bigg|^2 \bigg) \\& =\log C_{N, \log} + (2- 2^{**} ) \bigg(\frac{N-2m}{4m} -  \int_{\mathbb{R}^N} \bigg|\frac{u_0}{\|u_0\|_{L^2}}\bigg|^2 \log \left|\frac{u_0}{\|u_0\|_{L^2}}\right| \, dx\bigg) + \log\bigg(\frac{N-2m}{4m}  \bigg) \\ & \frac{N}{N-2m}\log\bigg(\int_{\mathbb{R}^N} \bigg|\frac{\nabla^m u_0}{\|u_0\|_{L^2}}\bigg|^2 \, dx \bigg)= \log \bigg( C_{N, \log } \frac{N-2m}{4m} \bigg) -1+ \frac{4m}{N-2m}\int_{\mathbb{R}^N} \bigg|\frac{u_0}{\|u_0\|_{L^2}}\bigg|^2 \log \left|\frac{u_0}{\|u_0\|_{L^2}}\right| \, dx \\ & \frac{N}{4m}\log\bigg(\int_{\mathbb{R}^N} \bigg| \frac{\nabla^m u_0}{\|u_0\|_{L^2}}\bigg|^2 \bigg)= \frac{N-2m}{4m}\log \bigg( C_{N, \log } \frac{N-2m}{4m}e^{-1} \bigg)+\int_{\mathbb{R}^N} \bigg|\frac{u_0}{\|u_0\|_{L^2}}\bigg|^2 \log \left|\frac{u_0}{\|u_0\|_{L^2}}\right| \, dx \\ & \log\bigg(\int_{\mathbb{R}^N} \bigg| \frac{\nabla^m u_0}{\|u_0\|_{L^2}}\bigg| ^2\, dx \bigg)^{\frac{N}{4m}}-\log \bigg( C_{N, \log } \frac{N-2m}{4m}e^{-1} \bigg)^{ \frac{N-2m}{4m}}= \int_{\mathbb{R}^N} \bigg|\frac{u_0}{\|u_0\|_{L^2}}\bigg|^2 \log \left|\frac{u_0}{\|u_0\|_{L^2}}\right| \, dx \\ & \log \bigg( \frac{\bigg(\int_{\mathbb{R}^N} \bigg| \frac{\nabla^m u_0}{\|u_0\|_{L^2}}\bigg|^2 \, dx  \bigg)^{\frac{N}{4m}}}{\bigg( C_{N, \log } \frac{N-2m}{4m}e^{-1} \bigg)^{ \frac{N-2m}{4m}}}\bigg)=\int_{\mathbb{R}^N} \bigg|\frac{u_0}{\|u_0\|_{L^2}}\bigg|^2 \log \left|\frac{u_0}{\|u_0\|_{L^2}}\right| \, dx\\ & \frac{N}{4m} \log\bigg( \bigg(  \frac{4m e}{C_{N,\log}(N-2m)}   \bigg)^{\frac{N-2m}{N}} \int_{\mathbb{R}^N} \bigg|\frac{\nabla^m u_0}{\|u_0\|_{L^2}}\bigg|^2\, dx \bigg)=\int_{\mathbb{R}^N} \bigg|\frac{u_0}{\|u_0\|_{L^2}}\bigg|^2 \log \left|\frac{u_0}{\|u_0\|_{L^2}}\right| \, dx  \end{aligned} \end{equation*}Therefore we obtain the equality for $\frac{u_0}{\|u_0\|_{L^2}}$ in \eqref{eq:1.7}. \\ 
    Let us suppose that \begin{equation*}
        \frac{N}{4m}\bigg(\bigg( \frac{4m e }{C_{N,\log}(N-2m)}\bigg)^{\frac{N-2m}{2m}} \int_{\mathbb{R}^N} |\nabla^m  u|^2 \, dx\bigg) = \int_{\mathbb{R}^N} |u|^2 \log |u|\, dx 
    \end{equation*} for same $u \in \mathcal{D}^{m,2}(\mathbb{R}^N)$ such that $\|u_0\|_{L^2}=1$. Then \begin{equation*}
        \left(\int_{\mathbb{R}^N} |\nabla^m u|^2 \, dx \right)^{\frac{N}{N-2m}} = C_{N, \log} \int_{\mathbb{R}^N} |u|^2 \log |u| \, dx
    \end{equation*} for $\alpha_1= \frac{N-2m}{4m}- \int_{\mathbb{R}^N}|u|^2\log |u|\, dx$ and the equality in \eqref{eq:6.1} holds for $u_1=e^{\alpha}u$. Hence $J(u_0)=\inf\limits_{\mathcal{M}} J$ for \begin{equation*}
        u_0:=u_1(r\cdot)\in \mathcal{M}, \, \, \textit{where} \, \, r=\bigg( 2^{*} \frac{\int_{\mathbb{R}^N} |u_1|^2 \log|u|\, dx}{\int_{\mathbb{R}^N}|\nabla^mu_1|^2\, dx}\, \bigg)^{\frac{1}{2m}}. 
    \end{equation*}
    Let us sketch the proof that $u_0$ is a critical point of $J$. Firstly, note that, for every $v \in C_0^\infty(\mathbb{R}^n)$, $G(u_0 + tv) \in L^1(\mathbb{R}^n)$, for $G(s) := s^2 \log |s|$. Fix an arbitrary $v \in C_0^\infty(\mathbb{R}^n)$. We use the fact that $G$ is $C^1$-smooth and the Lebesgue dominated convergence theorem to get
\[
\lim_{t \to 0} \frac{1}{t} \left( \int_{\mathbb{R}^n} G(u_0 + tv) \, dx - \int_{\mathbb{R}^n} G(u_0) \, dx \right) = \int_{\mathbb{R}^n} g(u_0)v \, dx.
\]

By the continuity, $\int_{\mathbb{R}^n} G(u_0 + tv) \, dx > 0$, for sufficiently small $|t| > 0$, so $(u_0 + tv)(x) \in \mathcal{M}$, where
\[
r = \left( \frac{2^* \int_{\mathbb{R}^n} G(u_0 + tv) \, dx}{\int_{\mathbb{R}^n} |\nabla^m (u_0 + tv)|^2 \, dx} \right)^{1/2m}.
\]

Hence
\[
J((u_0 + tv)(r)) \geq \inf_{\mathcal{M}} J = J(u_0),
\]
or, equivalently,
\[
\left( \frac{1}{2} - \frac{1}{2^*} \right) \int_{\mathbb{R}^n} |\nabla^m (u_0 + tv)|^2 \, dx \geq J(u_0)^{2m/N} \left( \int_{\mathbb{R}^n} G(u_0 + tv) \, dx \right)^{(N-2m)/N}.
\]
We then proceed similarly as in the last part of the proof of Lemma \ref{lem:theta} to conclude that
\[
\int_{\mathbb{R}^n} \nabla^m u_0 \nabla^mv \, dx = \int_{\mathbb{R}^n} g(u_0) v \, dx,
\]
which yields that $u_0$ is a critical point of $J$.

Finally, we show the estimate of the constant $C_{N, \log}$ from Theorem \ref{thm:2}. Observe that if $u \in \D$ and $\int_{\mathbb{R}^n} |u|^2 \, dx = 1$, in view of Lemma \ref{Poly-inequality} and the logarithmic Sobolev inequality \eqref{classical_log_ineq}, we obtain
\[
\int_{\mathbb{R}^n} |u|^2 \log(|u|) \, dx \leq \frac{N}{4} \log \left( \frac{2}{\pi e N} \left( \int_{\mathbb{R}^n} |\nabla^m u|^2 \, dx \right)^{1/2} \right)=
\frac{N}{4m} \log \left( \left( \frac{2}{\pi e N} \right)^m\int_{\mathbb{R}^n} |\nabla^m u|^2 \, dx \right).
\]

Thus, we find
\[
\left( \frac{4me}{C_{N, \log}(N-2m)} \right)^{(N-2m)/N} \leq \left( \frac{2}{\pi e N} \right)^m.
\]
\end{proof}

\section*{Acknowledgements}
The first two authors are supported by PRIN PNRR  P2022YFAJH {\sl \lq\lq Linear and Nonlinear PDEs: New directions and applications''} (CUP H53D23008950001), and partially supported by INdAM-GNAMPA.  
The second author thanks acknowledge financial
support from PNRR MUR project PE0000023 NQSTI - National Quantum Science and Technology
Institute (CUP H93C22000670006). The third author was partly supported by the National Science Centre, Poland (Grant No. 2023/51/B/ST1/00968).
\bigskip




\begin{bibdiv}

	\begin{biblist}
	
\bib{Adams}{book}{
   author={Adams, Robert A.},
   title={Sobolev spaces},
   note={Pure and Applied Mathematics, Vol. 65},
   publisher={Academic Press [A subsidiary of Harcourt Brace Jovanovich,
   Publishers], New York-London},
   date={1975},
   pages={xviii+268},
}


\bib{Antman}{book}{
	author={Antman, Stuart S.},
	title={Nonlinear problems of elasticity},
	series={Applied Mathematical Sciences},
	volume={107},
	edition={2},
	publisher={Springer, New York},
	date={2005},
	pages={xviii+831},
	isbn={0-387-20880-1},
}


\bib{BerestyckiLions}{article}{
	author={Berestycki, H.},
	author={Lions, P.-L.},
	title={Nonlinear scalar field equations. I. Existence of a ground state},
	journal={Arch. Rational Mech. Anal.},
	volume={82},
	date={1983},
	number={4},
	pages={313--345},
	issn={0003-9527},
}
\bib{BMS}{article}{
author={Bieganowski, Bartosz},
author={Mederski, J.},
author={Schino, J.}
title={Normalized Solutions to at Least Mass Critical Problems: Singular Polyharmonic Equations and Related Curl-Curl Problems}
journal={Journal of Geometric Analysis}
volume={34}
date={2024}
pages={pp. 32}

}

\bib{BrezisKato}{article}{
	author={Br\'{e}zis, Ha\"{\i}m},
	author={Kato, Tosio},
	title={Remarks on the Schr\"{o}dinger operator with singular complex
		potentials},
	journal={J. Math. Pures Appl. (9)},
	volume={58},
	date={1979},
	number={2},
	pages={137--151},
	issn={0021-7824},
}


\bib{CH}{article}{
author={Cahn, J. W.,}
author={ Hilliard, J. E.,}
title={Free energy of a nonuniform system, I. Interfacial free energy,}
journal={ J. Chem. Phys,}
number={28,}
date={1958}
volume={ 258,}
pages={258–267}
}

\bib{Carlen}{article}{
	author={Carlen, Eric A.},
	title={Superadditivity of Fisher's information and logarithmic Sobolev
		inequalities},
	journal={J. Funct. Anal.},
	volume={101},
	date={1991},
	number={1},
	pages={194--211},
	issn={0022-1236},
	review={\MR{1132315}},
}

\bib{DelPinoJMPA}{article}{
	author={del Pino, Manuel},
	author={Dolbeault, Jean},
	title={Best constants for Gagliardo-Nirenberg inequalities and
		applications to nonlinear diffusions},
	journal={J. Math. Pures Appl. (9)},
	volume={81},
	date={2002},
	number={9},
	pages={847--875},
	issn={0021-7824},
}

\bib{DelPino}{article}{
	author={del Pino, Manuel},
	author={Dolbeault, Jean},
	title={The optimal Euclidean $L^p$-Sobolev logarithmic inequality},
	journal={J. Funct. Anal.},
	volume={197},
	date={2003},
	number={1},
	pages={151--161},
	issn={0022-1236},
}
\bib{Grunau}{article}{
	author={S. Dipierro},
        author={H-C. Grunau}
	title={Boggio’s formula for fractional polyharmonic Dirichlet problems},
	journal={Annali M. P. A.},
	date={2017},
	number={196},
	pages={1327–1344},
}

\bib{Fibich}{article}{
	author={Fibich, Gadi},
	author={Ilan, Boaz},
	author={Papanicolaou, George},
	title={Self-focusing with fourth-order dispersion},
	journal={SIAM J. Appl. Math.},
	volume={62},
	date={2002},
	number={4},
	pages={1437--1462},
	issn={0036-1399},
}

\bib{Gazzola}{book}{
   author={Gazzola, Filippo},
   author={Grunau, Hans-Christoph},
   author={Sweers, Guido},
   title={Polyharmonic boundary value problems},
   series={Lecture Notes in Mathematics},
   volume={1991},
   note={Positivity preserving and nonlinear higher order elliptic equations
   in bounded domains},
   publisher={Springer-Verlag, Berlin},
   date={2010},
   pages={xviii+423},
   isbn={978-3-642-12244-6},
}

\bib{Gross}{article}{
	author={Gross, Leonard},
	title={Logarithmic Sobolev inequalities},
	journal={Amer. J. Math.},
	volume={97},
	date={1975},
	number={4},
	pages={1061--1083},
	issn={0002-9327},

}



\bib{Lions1}{article}{
	author={Lions, P.-L.},
	title={The concentration-compactness principle in the calculus of
		variations. The locally compact case. II},
	journal={Ann. Inst. H. Poincar\'{e} Anal. Non Lin\'{e}aire},
	volume={1},
	date={1984},
	number={4},
	pages={223--283},
	issn={0294-1449},
}

\bib{Lions2}{article}{
	author={Lions, P.-L.},
	title={The concentration-compactness principle in the calculus of
		variations. The locally compact case. I},
	journal={Ann. Inst. H. Poincar\'{e} Anal. Non Lin\'{e}aire},
	volume={1},
	date={1984},
	number={2},
	pages={109--145},
	issn={0294-1449},
}

\bib{love1927elasticity}{book}{
	author    = {A. E. H. Love},
	title     = {A Treatise on the Mathematical Theory of Elasticity},
	edition   = {4},
	year      = {1927},
	publisher = {Cambridge University Press},
	address   = {Cambridge, UK}
}


\bib{MMR}{article}{
author={A. Malchiodi,}
author={R. Mandel,}
author={ M. Rizzi,}
title={Periodic Solutions to a Cahn-Hilliard–Willmore Equation in the Plane, }
journal={Arch. Rational Mech. Anal.,}
volume={ 228 }
date={2018}
pages={ 821–-866}
}

\bib{Mazja}{book}{
   author={Maz'ja, Vladimir G.},
   title={Sobolev spaces},
   series={Springer Series in Soviet Mathematics},
   note={Translated from the Russian by T. O. Shaposhnikova},
   publisher={Springer-Verlag, Berlin},
   date={1985},
   pages={xix+486},
   isbn={3-540-13589-8},
}

\bib{MederskiNon2020}{article}{
	author={Mederski, Jaros\l aw},
	title={Nonradial solutions of nonlinear scalar field equations},
	journal={Nonlinearity},
	volume={33},
	date={2020},
	number={12},
	pages={6349--6380},
	issn={0951-7715},
}

\bib{Mederski}{article}{
	author={Mederski, Jaros\l aw},
	title={General class of optimal Sobolev inequalities and nonlinear scalar
		field equations},
	journal={J. Differential Equations},
	volume={281},
	date={2021},
	pages={411--441},
	issn={0022-0396},
}
\bib{Med-Siem}{article}{
	author={Mederski J.},
    author={ Siemianowski J.},
	title={Biharmonic Nonlinear Scalar Field Equations},
	journal={International Mathematics Research Notices,},
	volume={23},
	date={2023},
	pages={19963--19995},
 }

\bib{Meleshko}{article}{
       author = {{Meleshko}, V.V.},
        title = {Selected topics in the history of the two-dimensional biharmonic problem},
      journal = {Appl. Mech. Rev.},
      volume={56},
      date={2003},
      pages={33-85},
}

\bib{mindlin1964microstructure}{article}{
	author    = {Raymond D. Mindlin},
	title     = {Micro-structure in linear elasticity},
	journal   = {Archive for Rational Mechanics and Analysis},
	volume    = {16},
	number    = {1},
	pages     = {51--78},
	year      = {1964},
	doi       = {10.1007/BF00248490},
	url       = {https://doi.org/10.1007/BF00248490}
}

%

\bib{PucciSerrin}{article}{
	author={Pucci, Patrizia},
	author={Serrin, James},
	title={A general variational identity},
	journal={Indiana Univ. Math. J.},
	volume={35},
	date={1986},
	number={3},
	pages={681--703},
	issn={0022-2518},
}
\bib{R}{article}{
author={M. Rizzi,}
title={ Clifford Tori and the singularly perturbed Cahn–Hilliard equation, }
journal={Journal Differential Equations, }
volume={262,}
date={2017}
number={10,}
pages={5306--5362}
}

\bib{Selvadurai}{book}{
	author={Selvadurai, A. P. S.},
	title={Partial differential equations in mechanics. 2},
	note={The biharmonic equation, Poisson's equation},
	publisher={Springer-Verlag, Berlin},
	date={2000},
	pages={xviii+698},
	isbn={3-540-67284-2},
}
\bib{Siemi}{article}{author={Siemianowski, J.},
title={Brezis–Kato type regularity results for higher order elliptic operators, [arXiv:2202.11408v1 [math.AP] 23 Feb 2022]}, 
}



\bib{Weissler} {article}{
	author={Weissler, F. B.},
	title={Logarithmic Sobolev inequalities for the heat-diffusion semigroup}, 
	journal={Trans. Am. Math. Soc.},
	volume={237},
	 date={1978},
	 pages={255--269},
}

	\end{biblist}
\end{bibdiv}

\end{document}